\documentclass[12pt]{extarticle}
\usepackage{comment}
\usepackage{geometry}
\usepackage{tabularx}
\def\version{30 December 2024}

\usepackage{ifpdf}

\newcommand{\ac}[1]{\noindent\textcolor{red}
{{\rm [\![}\mbox{\sc{AC}$\blacktriangleright\!\!\blacktriangleright$}: {#1}{\rm ]\!]}}}

\renewcommand{\ac}[1]{}
\usepackage{mathtools}
\usepackage{latexsym,epsfig,bm}
\usepackage{upgreek}
\usepackage{mathrsfs}
\usepackage{amssymb}

\usepackage{upref}
\usepackage{esint}

\nonstopmode

\overfullrule=1.5pt
\usepackage[english]{babel}

\usepackage{ulem} 
\normalem

\usepackage[usenames,dvipsnames]{color}
\usepackage{graphicx}

\definecolor{MyDarkBlue}{rgb}{0,0.08,0.45}
\definecolor{MyDarkGreen}{rgb}{0,0.7,0}
\definecolor{MyGreen}{rgb}{0,0.7,0.1}
\definecolor{Pomegranade}{rgb}{0.6,0.1,0.15}
\definecolor{purple}{rgb}{0.6,0.1,0.15}
\usepackage[backref=none]{hyperref}
\hypersetup{pdfborder={0 0 0},
  colorlinks,
  urlcolor={MyDarkBlue},
  linkcolor={MyDarkBlue},
  citecolor={MyDarkBlue},
  breaklinks=true}
\providecommand{\url}[1]{\small\textcolor{blue}{#1}}
\usepackage{doi}
\providecommand{\eprint}[1]{}
\renewcommand{\eprint}[1]{arXiv:\href{http://arxiv.org/abs/#1}{#1}}

\providecommand{\eqref}[1]{\mathrm{ (\ref{#1})}}

\textheight 9.1in 

\oddsidemargin -0mm
\evensidemargin -0mm
\topmargin -2cm
\textwidth 6.6in




\newcommand{\unity}{\textrm{{\usefont{U}{fplmbb}{m}{n}1}}}

\newcommand{\fra}[2]{{#1}/{#2}}

\providecommand{\longhookrightarrow}{\lhook\joinrel\longrightarrow}




\newcommand{\dom}{\mathfrak{D}}
\newcommand{\range}{\mathfrak{R}}
\newcommand{\ran}{\mathfrak{R}}

\newcommand{\jj}{\mathrm{i}}

\newcommand{\scrB}{\mathscr{B}}
\newcommand{\scrC}{\mathscr{C}}
\newcommand{\scrD}{\mathscr{D}}

\newcommand{\calB}{\mathcal{B}}

\newcommand{\calK}{\mathcal{K}}

\newcommand{\calR}{\mathcal{R}}
\newcommand{\calS}{\mathcal{S}}

\newcommand{\bfE}{\mathbf{E}}
\newcommand{\bfF}{\mathbf{F}}

\newcommand{\doma}{\dom(\hatA)}

\newcommand{\bfX}{\mathbf{X}}

\newcommand{\hatA}{\hat{A}}

\newcommand{\notyet}[1]{{}}

\newcommand{\tr}{\mathop{\mathrm{tr}}}
\newcommand{\rank}{\mathop\mathrm{ rank}}

\newcommand{\p}{\partial}
\newcommand{\at}[1]{\!\!\upharpoonright\sb{{#1}}}

\newcommand{\R}{\mathbb{R}}
\newcommand{\C}{\mathbb{C}}

\newcommand{\N}{\mathbb{N}}
\newcommand{\Abs}[1]{\left\vert#1\right\vert}
\newcommand{\abs}[1]{\vert #1 \vert}

\newcommand{\norm}[1]{\Vert #1 \Vert}

\newcommand{\sothat}{\,\,{\rm :}\ \ }

\newcommand{\wlim}{\mathop{\mbox{\rm w-lim}}}

\DeclareMathSymbol{\varGamma}{\mathord}{letters}{"00}
\DeclareMathSymbol{\varDelta}{\mathord}{letters}{"01}
\DeclareMathSymbol{\varTheta}{\mathord}{letters}{"02}
\DeclareMathSymbol{\varLambda}{\mathord}{letters}{"03}
\DeclareMathSymbol{\varXi}{\mathord}{letters}{"04}
\DeclareMathSymbol{\varPi}{\mathord}{letters}{"05}
\DeclareMathSymbol{\varSigma}{\mathord}{letters}{"06}
\DeclareMathSymbol{\varUpsilon}{\mathord}{letters}{"07}
\DeclareMathSymbol{\varPhi}{\mathord}{letters}{"08}
\DeclareMathSymbol{\varPsi}{\mathord}{letters}{"09}
\DeclareMathSymbol{\varOmega}{\mathord}{letters}{"0A}

\usepackage{amsthm,xpatch}
\makeatletter
\xpatchcmd{\@thm}{.}{\,}{}{}
\makeatother

\theoremstyle{plain}
\newtheorem{lemma}{Lemma}[section]
\newtheorem{theorem}[lemma]{Theorem}
\newtheorem{corollary}[lemma]{Corollary}

\theoremstyle{definition}
\newtheorem{definition}[lemma]{Definition}

\newtheorem{assumption}{Assumption}

\theoremstyle{remark}


\newtheorem{remark}{Remark}[section]




\newcounter{step}

\makeatletter\@addtoreset{equation}{section}
\makeatletter\@addtoreset{lemma}{section}
\makeatother

\renewcommand{\Re}{\mathop{\rm{R\hskip -1pt e}}\nolimits}
\renewcommand{\Im}{\mathop{\rm{I\hskip -1pt m}}\nolimits}

\renewenvironment{abstract}
  {\list{}{\listparindent 0.0cm%
  \baselineskip 0pt
  \setlength{\leftmargin}{1.5cm}
  \setlength{\rightmargin}{1cm}
  }%
  \item\relax \hskip -10pt {\sc Abstract.}\ \footnotesize}
  {\endlist}

\newcommand{\shortto}[1][3pt]{\mathrel{%
   \hbox{\hskip 1pt\rule[\dimexpr\fontdimen22\textfont2-.2pt\relax]{#1}{.4pt}}%
   \mkern-6mu\hbox{\usefont{U}{lasy}{m}{n}\symbol{41}}}}

\begin{document}

\title{
Virtual levels, virtual states,
and the limiting absorption principle
for higher order differential operators
in 1D
}

\author{
{\sc Andrew Comech}
\\
{\it\small Texas A\&M University, College Station, Texas, USA
}
\\[2ex]
{\sc Hatice Pekmez}
\\
{\it\small Texas A\&M University, College Station, Texas, USA
}
}

\date{\version}

\maketitle



\begin{abstract}
We consider the resolvent estimates
and properties of virtual states
of the higher order derivatives
in one dimension,
focusing on
Schr\"odinger-type
operators of degree $N=3$
(the method applies to higher orders).
The derivation is based on
the construction of the Jost solution
for higher order differential operators
and on restricting the resolvent
onto subspaces of finite codimension.
\end{abstract}

\section{Introduction}

Let us recall the general picture
\cite{virtual-levels,virtual-levels-review}.
We consider a closed operator
$A\in\scrC(\bfX)$
in a complex Banach space $\bfX$.
The norm of its resolvent
$(A-z I_\bfX)^{-1}$, of course,
becomes unboundedly large
when $z$ approaches the essential spectrum
of $A$.
Yet the resolvent may have a limit
as an operator in some auxiliary spaces;
then we say that at a particular point $z_0$
of the essential spectrum
the resolvent satisfies
the \emph{limiting absorption principle} (LAP),
as a mapping in these spaces.
This idea goes back to
\cite{ignatowsky1905reflexion,smirnov1941course,sveshnikov1950radiation}
and took up its present form in
\cite{agmon1970spectral}.

When we add to $A$
a relatively compact perturbation $B$,
the resolvent
$R_B(z)=(A+B-z I_\bfX)^{-1}$
either satisfies
\emph{the same}
LAP (at the same point, in the same spaces)...
or not.
In the latter case,
the resolvent is not uniformly bounded
near $z_0$ as a mapping in these spaces
\cite{virtual-levels};
we say that the resulting operator
$A+B$
has a \emph{virtual level} at a given point.

In other words, virtual levels
correspond to particular singularities
of the resolvent at the essential spectrum.
This idea goes back to
E.\,Wigner \cite{wigner1933streuung}
and
H.\,Bethe and R.\,Peierls \cite{bethe1935scattering}
and was further addressed
by Birman \cite{birman1961spectrum},
Faddeev \cite{faddeev1963mathematical},
Simon \cite{simon1973resonances,simon1976bound},
Vainberg \cite{vainberg1968analytical,vainberg1975short},
Yafaev \cite{yafaev1974theory,yafaev1975virtual},
Rauch \cite{rauch1978local},
and Jensen and Kato \cite{jensen1979spectral},
with the focus on
Schr\"odinger operators
in three
dimensions.
Uniform resolvent estimates for
Schr\"odinger operators in higher dimensions appeared in
\cite{kenig1987uniform},
\cite{frank2011eigenvalue},
\cite{frank2017eigenvalue}, \cite{gutierrez2004nontrivial},
\cite{bouclet2018uniform}, \cite{ren2018endpoint}, \cite{mizutani2019eigenvalue}, \cite{kwon2020sharp}.
For
the Laplacian in $\R^d$, $d\ge 3$,
the $L^p\to L^{p'}$ resolvent estimates
were proved in \cite{kenig1987uniform}.

Dimensions $d=1$ and $d=2$
are exceptional,
in the sense that the free Laplace operator
has a virtual level at the threshold $z_0=0$
and does not satisfy LAP
uniformly in an open neighborhood of the threshold.
The Schr\"odinger operators
in one and two dimensions have been covered in
\cite{bolle1985complete,bolle1987scattering}
and in 
\cite{bolle1988threshold}
and then by Jensen and Nenciu in
\cite{jensen2001unified,jensen2004erratum}.
To derive the optimal resolvent estimates
near the threshold,
one needs to either consider a particular
perturbation
of the Laplacian which destroys the virtual level,
or to restrict the Laplacian onto a space
of finite codimension;
this program was completed
in \cite{virtual-levels}
for the Schr\"odinger operators
(with complex-valued potentials),
giving optimal resolvent estimates
(when there is no virtual level at the threshold)
and optimal properties of the virtual states
(when there is a virtual level at the threshold).
For related results on properties of virtual states
for selfadjoint
Schr\"odinger operators in dimensions $d\le 2$,
see
\cite[Theorem 2.3]{barth2021absence}.

As an illustration, let us consider the
Schr\"odinger operator
with (complex-valued) compactly supported potential
(we closely follow the exposition
in \cite{virtual-levels,virtual-levels-review}):
\begin{align}
A=-\p_x^2+V\in\scrC(L^2(\R,\C)),
\qquad
V\in C_{\mathrm{comp}}(\R,\C),
\qquad
\dom(A)=H^2(\R,\C).
\end{align}
For such $V$,
one has $\sigma\sb{\mathrm{ess}}(-\p_x^2+V)=[0,+\infty)$.
For $z\in\C\setminus[0,+\infty)$
the resolvent
$R_V(z)=(-\p_x^2+V-z)^{-1}$
can be constructed in terms of the Jost solutions.
Assume that
$z=\zeta^2$ with $\zeta\in\C_{+}$.
Then the Jost solutions
$\theta_\pm(x,\zeta)$
can be characterized by
\[
\begin{cases}
\theta_{+}(x,\zeta)\approx
e^{\jj\zeta x},
\quad
&
x\to+\infty;
\\
\theta_{-}(x,\zeta)\approx
e^{-\jj\zeta x},
\quad
&
x\to-\infty.
\end{cases}
\]
The Jost solutions
$\theta_\pm$ also have limits
$\theta_\pm(x,\zeta_0)$
as $\zeta\to\zeta_0\in\R$.
Since we assume that
$V$ is compactly supported,
the above relations turn
into equalities for $\abs{x}$ large enough.
The corresponding  value of $z=\zeta^2$
is an eigenvalue
if $\theta_+(x,\zeta)$
and $\theta_-(x,\zeta)$
are linearly dependent,
so that their Wronskian
\[
W[\theta_{+},\theta_{-}](\zeta)
=\theta_{+}(x,\zeta)\p_x\theta_{-}(x,\zeta)-
\p_x\theta_{+}(x,\zeta)\theta_{-}(x,\zeta)
\]
is equal to zero
(as usual, $W[\theta_{+},\theta_{-}]$
is $x$-independent).
If instead
the Wronskian does not vanish at the corresponding
value of $\zeta$,
then the resolvent
$R_V=(A-z I)^{-1}$,
$z=\zeta^2$,
of $A=-\p_x^2+V$
is represented by the integral operator
with integral kernel
\begin{align}\label{def-gv}
G_V(x,y;\zeta)=
\frac{1}{W[\theta_{+},\theta_{-}](y,\zeta)}
\begin{cases}
\theta_{+}(x,\zeta)\theta_{-}(y,\zeta),&x\ge y,
\\
\theta_{-}(x,\zeta)\theta_{+}(y,\zeta),&x\le y.
\end{cases}
\end{align}
The above resolvent has a pointwise
limit (in $x,\,y$)
as $\zeta\to 0$ as long as
$W[\theta_+,\theta_-](\zeta)$
does not vanish at $\zeta_0=0$;
then one says that
(the resolvent of) $A$
satisfies the limiting absorption principle at $z_0=0$.
If $W[\theta_+,\theta_-](\zeta)$ vanishes
at $\zeta_0=0$,
one says that $A$ has a virtual level at $z_0=0$.
In this case,
$\theta_-(x,0)$ and $\theta_+(x,0)$
are linearly dependent,
thus the equation
$(A-z_0)u=0$ has a nontrivial bounded solution.
We note that the equation $(A-z_0)u=0$
always has nontrivial solutions
which grow linearly at infinity;
$z_0=0$ is a virtual level if and only if
there is a bounded nontrivial solution.

What about the resolvent estimates
in the limit $\zeta\to 0$
(if $z_0=0$ is \emph{not} a virtual level,
that is,
when $W[\theta_+,\theta_{-}](0)\ne 0$)?
Since there are no nontrivial bounded solutions to
$Au=0$,
we conclude that $\theta_+(x,0)\approx 1$  for large positive $x$
grows linearly as $x\to-\infty$,
while $\theta_-$ grows linearly as $x\to+\infty$.
Then \eqref{def-gv}
shows that
\begin{align}\label{agv}
\abs{G_V(x,y;0)}
\le C\min(\langle x\rangle,\langle y\rangle),
\end{align}
with some $C>0$;
in fact, this estimate
holds for $\abs{G_V(x,y;\zeta)}$
not only at $\zeta=0$, but it also holds
uniformly in
$\zeta\in\C_{+}\cap\mathbb{D}_\delta$
with $\delta>0$ small enough.
Now one can easily derive the corresponding estimates,
showing that the resolvent is uniformly bounded
as a mapping
\begin{align}\label{ll2}
L^2_s(\R,\C)\to L^2_{-s'}(\R,\C),
\qquad
s,\,s'>1/2,
\quad
s+s'\ge 2.
\end{align}
For details, we refer the reader to
\cite[Section 3]{virtual-levels}.

\medskip

The above picture looks very simple
in the case of
the Schr\"odinger equation in one dimension
as long as $V$ is compactly supported.
In fact, there is not too much change
if $V$ is not compactly supported
yet satisfies
$\langle x\rangle^{2+0}V\in L^\infty$,
or even a weaker assumption
$\langle x\rangle V\in L^1$
which is
sufficient for the construction of Jost solutions.
We will show
how the case of higher derivatives,
\begin{align}
A=(-\jj\p_x)^N+V\in\scrC(L^2(\R,\C)),
\qquad
V\in C_{\mathrm{comp}}(\R,\C),
\qquad
\dom(A)=H^N(\R,\C),
\end{align}
can also be discussed along the same lines.
The questions that we are going to answer in
this article are:
\begin{enumerate}
\item
If $z_0=0$ is not a virtual level,
then what are the optimal
resolvent estimates? That is,
in which spaces does the limiting absorption
principle hold?
\item
If $z_0=0$ is a virtual level,
what are the properties
of corresponding virtual states?
\end{enumerate}

We focus on the case $N=3$,
completely answering the above questions
in Theorem~\ref{theorem-2} below.
In this case, for each $\zeta$
with $0\le\arg(\zeta)\le\pi/3$,
$\zeta^3=z\in\overline{\C_{+}}$,
there are three Jost solutions
to the equation
\begin{align}
\big((-\jj\p_x)^3+V\big)u=\zeta^3 u,
\end{align}
two decaying (or bounded)
in one direction of $x$
and one in the other.
Denote $\alpha=e^{2\pi\jj/3}$;
then
$\theta_1\approx e^{\jj\zeta x}$,
$\theta_2\approx e^{\jj\alpha\zeta x}$
for $x\to+\infty$
remain bounded
for $x\ge 0$,
while
$\gamma$,
which behaves as $e^{\jj\alpha^2\zeta x}$
for $x\to-\infty$,
remains bounded  for negative $x$.
For each fixed $y$,
one can use the linear combinations
of these Jost solutions,
\begin{align}\label{def-ux}
u(x)=\begin{cases}
c_1\theta_1(x,\zeta)+c_2\theta_2(x,\zeta),&
x\ge y;
\\
c\gamma(x,\zeta),&y\le x,
\end{cases}
\end{align}
to construct a function
that would be $C^1$ on $\R$
but such that its second derivative
would have a jump at $y$;
this is the expression for the resolvent
$(A-z I)^{-1}$
at $z=\zeta^3$.
This would fail if at a particular $\zeta$
the functions
$\theta_1$, $\theta_2$, and $\gamma$
are linearly dependent;
this means that 
\eqref{def-ux}
has zero jump of the second derivative at $x=y$,
thus
$u(x)$ is an $L^2$-eigenfunction
corresponding to $z=\zeta^3$.

In the limit $\zeta\to 0$,
both
$\theta_1(x,\zeta)$ and $\theta_2(x,\zeta)$
converge (pointwise) to the same function $\theta_0(x)$
-- solution to $\big((-\jj\p_x)^3+V\big)u=0$ --
which equals $1$ for $x\gg 1$;
the function
$\gamma(x,\zeta)$
converges to
$\gamma_0(x)$,
another solution to $\big((-\jj\p_x)^3+V\big)u=0$
which equals $1$ for $x\ll -1$.
There is no virtual level
at $z=0$ if
$\theta_0(x)$
and
$\gamma_0(x)$
grow quadratically as $x\to-\infty$
and $x\to+\infty$, respectively.

What about the estimates in the case when there is
no virtual level at $z=0$?
Now \eqref{agv} takes the form
\begin{align}\label{agv-n}
\abs{G_V(x,y;0)}
\le C\min(\langle x\rangle,\langle y\rangle)^{N-2};
\end{align}
\eqref{ll2} takes the form
\begin{align}\label{lln}
L^2_s(\R,\C)\to L^2_{-s'}(\R,\C),
\qquad
s,\,s'>N-3/2,
\quad
s+s'\ge N.
\end{align}
Of course, for $N\ge 3$, the
condition $s+s'\ge N$ becomes redundant.
The estimate
\eqref{lln}
is our main result:

\begin{theorem}
\label{theorem-1}
Let $N=3$.
Let $\varOmega=\C_{+}$.
Consider
$A=(-\jj\p_x)^N$ in $L^2(\R,\C)$,
with domain $\dom(A)=H^N(\R,\C)$.
Let
$s,\,s'>N-3/2$
 and let
$B:\,L^2_{-s'}(\R,\C)\to L^2_{s}(\R,\C)$
be $A$-compact.\footnote{In the sense that
  the set $\ran(B\at{\mathbb{B}_1(\hat{A})})$
  is precompact in $L^2_s(\R,\C)$,
  with $\hat{A}\in\scrC(\bfF)$
  a closed extension of $A$ onto $\bfF=L^2_{-s'}(\R,\C)$
  and
  $\mathbb{B}_1=\{f\in\dom(\hat{A})\sothat
  \norm{f}_\bfF+\norm{\hat{A}f}_{\bfF}\}$;
  for more details,
  see \cite[Definition A.1]{virtual-levels}.}
Then
\begin{itemize}
\item
either the resolvent of
$A+B$ satisfies the limiting absorption
principle at $z_0=0$
with respect to $L^2_s,\,L^2_{-s'},\,\varOmega$,
so that the resolvent $R_B(z)=(A+B-z I)^{-1}$
converges in the uniform operator topology
of $\scrB\big(L^2_s,L^2_{-s'}\big)$
as $z\to z_0$, $z\in\varOmega$,
\item
or
there is a virtual state
$\Psi\in L^2_{-s'}$,
$(A+B-z_0)\Psi=0$,
$\Psi\in\ran\big(
(A+B-z_0 I)^{-1}_{L^2_s,L^2_{-s'},\varOmega}\big)$.
\end{itemize}
\end{theorem}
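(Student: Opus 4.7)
The plan is to reduce the dichotomy to a Fredholm-type analysis of Jost solutions, following the heuristic sketched between \eqref{def-ux} and \eqref{agv-n} and the abstract virtual-level framework of \cite{virtual-levels}.

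First I would construct, for $\zeta$ in the sector $\{0\le\arg\zeta\le\pi/3,\ \abs{\zeta}<\delta\}$, three Jost solutions $\theta_1(x,\zeta)$, $\theta_2(x,\zeta)$, $\gamma(x,\zeta)$ of $\big((-\jj\p_x)^3+B-\zeta^3\big)u=0$ via contraction in the Volterra integral equations obtained from variation of parameters against the free fundamental system. The $A$-compactness of $B:L^2_{-s'}\to L^2_s$ together with the weight assumption $s,\,s'>3/2$ should supply the decay needed to close the contraction uniformly down to $\zeta=0$. The resolvent of $A+B$ at $z=\zeta^3$ is then the integral operator whose kernel is the piecewise expression of \eqref{def-ux}, divided by the Wronskian $W[\theta_1,\theta_2,\gamma](\zeta)$.

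Next I would take $\zeta\to 0$. The Jost solutions degenerate: $\theta_1,\theta_2$ both tend to a solution $\theta_0$ of $(A+B)u=0$ normalized to $1$ at $+\infty$, and $\gamma\to\gamma_0$ is normalized to $1$ at $-\infty$. The Wronskian vanishes to order $\zeta^3$, while the piecewise numerators in the kernel carry matching Vandermonde-type vanishing via $\theta_2-\theta_1=O(\zeta)$, so a term-by-term Taylor expansion in $\zeta$ produces a finite limit for $G(x,y;\zeta)$ precisely when the determinant formed from $\theta_0$, $\gamma_0$, and the leading $\zeta$-derivative of $\theta_1-\theta_2$ is nonzero -- equivalently, when no nontrivial combination of $\theta_0,\gamma_0$ is bounded on all of $\R$, which is the statement that $z_0=0$ is not a virtual level of $A+B$. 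In this nondegenerate case $\theta_0$ and $\gamma_0$ grow quadratically at the opposite ends, so the surviving kernel satisfies \eqref{agv-n} with $N=3$ uniformly in $\zeta$ near $0$; a Schur test against the weights $\langle x\rangle^{-s'}$ and $\langle y\rangle^{-s}$ converges exactly when $s,\,s'>3/2$, and uniform continuity of the Jost solutions in $\zeta$ upgrades pointwise to operator-norm convergence in $\scrB(L^2_s,L^2_{-s'})$, producing the first alternative.

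If the above determinant vanishes, I would extract a nontrivial $\Psi=c_0\theta_0+c_\gamma\gamma_0$ with $(A+B)\Psi=0$. Boundedness of $\theta_0$ at $+\infty$, of $\gamma_0$ at $-\infty$, and at most linear growth at the opposite ends -- the order of growth being controlled by the order of cancellation in the vanishing determinant -- place $\Psi\in L^2_{-s'}$ for every $s'>3/2$. Identifying $\Psi$ as the limit of $R_B(z)f$ for a suitable $f\in L^2_s$ as $z\to 0$ through $\varOmega=\C_{+}$ then shows $\Psi\in\ran\big((A+B-z_0 I)^{-1}_{L^2_s,L^2_{-s'},\varOmega}\big)$, giving the second alternative.

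The main technical obstacle is the Wronskian analysis at this triple degeneracy. Unlike the second-order case, where the Wronskian of two Jost solutions is a single nonvanishing scalar and \eqref{def-gv} yields the kernel directly, here three Jost solutions collide as $\zeta\to 0$, and one must track the precise order of vanishing of numerator and denominator to extract the polynomial growth $\min(\langle x\rangle,\langle y\rangle)^{N-2}$ of \eqref{agv-n}. Aligning the Taylor orders in the triple collision is what ultimately pins down the critical weight $N-3/2=3/2$.
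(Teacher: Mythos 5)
Your proposal proves (at best) the wrong theorem: it treats $B$ as if it were a decaying local potential, whereas Theorem~\ref{theorem-1} concerns an \emph{arbitrary} $A$-compact operator $B:\,L^2_{-s'}(\R,\C)\to L^2_s(\R,\C)$. For such a $B$ (e.g.\ a nonlocal finite-rank operator $\phi\otimes\psi$, or any abstract integral operator), the equation $\big((-\jj\p_x)^3+B-\zeta^3\big)u=0$ is not an ordinary differential equation: there is no three-dimensional solution space, no fundamental system, no Wronskian $W[\theta_1,\theta_2,\gamma]$, and no Volterra structure coming from variation of parameters, because the Volterra form of the Jost equation relies on $B$ acting pointwise with pointwise decay of $B(x)$ --- information that $A$-compactness of $B$ as a map $L^2_{-s'}\to L^2_s$ simply does not provide. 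So the entire first step (``construct three Jost solutions of $(A+B-\zeta^3)u=0$ by contraction'') cannot be carried out in the stated generality; it only makes sense for multiplication operators, and even there the paper's own Jost construction (Theorem~\ref{theorem-jost}) assumes exponential decay \eqref{v-small}, while the characterization via Jost solutions is reserved for Theorem~\ref{theorem-2} with $V\in C_{\mathrm{comp}}$.

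The paper's route is different and avoids this obstruction entirely: Theorem~\ref{theorem-1} is deduced from the abstract theory of virtual levels in \cite{virtual-levels}, whose only nontrivial input here is a statement about the \emph{unperturbed} operator, namely Lemma~\ref{lemma-main}: the explicit rank-one perturbation $B_0=\phi_0\otimes\phi_0$ regularizes the free resolvent, i.e.\ $(A+B_0-zI)^{-1}$ converges in $\scrB(L^2_s,L^2_{-s'})$ as $z\to 0$, $z\in\C_{+}$. This is proved by splitting the free resolvent kernel \eqref{def-r-kernel} into its singular polynomial part and the Taylor remainder $R_1$ (\eqref{def-r0}--\eqref{r1-small}), estimating $R_1$ via Lemma~\ref{lemma-estimates}, and restricting to the finite-codimension subspace $L^1_{N-2,0,\dots,0}$ through the identities \eqref{e1-1}--\eqref{qn3}. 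Once $z_0=0$ is known to be a virtual level of $A$ of rank at most one relative to $(L^2_s,L^2_{-s'},\C_{+})$, the dichotomy of Theorem~\ref{theorem-1} for \emph{every} relatively compact $B$ --- LAP in the uniform operator topology, or a virtual state $\Psi\in L^2_{-s'}$ lying in the range of the regularized resolvent --- is exactly the general dichotomy of \cite{virtual-levels}, with no ODE analysis of the perturbed operator needed. (Relatedly, your final step identifying $\Psi$ as $\lim R_B(z)f$ is also off: when the LAP fails, $R_B(z)$ has no limit; the virtual state is obtained from the limit of the resolvent of $A+B$ \emph{further corrected by a finite-rank regularizer}, which is what the notation $(A+B-z_0I)^{-1}_{L^2_s,L^2_{-s'},\varOmega}$ in Definition~\ref{def-virtual} refers to.) Your Wronskian/triple-collision analysis is in the spirit of the paper's Section~\ref{sect-resolvent} and of the proof of Theorem~\ref{theorem-2}, but it cannot substitute for the abstract argument that Theorem~\ref{theorem-1} actually requires.
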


In the case when
the perturbation $B$
is represented by the function
$V\in C_{\mathrm{comp}}(\R,\C)$,
we have the following result,
which gives the characterization
of the virtual state:

\begin{theorem}
\label{theorem-2}
Let $N=3$.
Consider
$A=(-\jj\p_x)^N$ in $L^2(\R,\C)$,
with domain $\dom(A)=H^N(\R,\C)$.
Let
$s,\,s'>N-3/2$
and let $V$ be the operator
of multiplication by $V\in C_{\mathrm{comp}}(\R,\C)$.
Then

\begin{itemize}
\item
either the resolvent
\[
(A+V-z I)^{-1}:\,
L^2_s(\R,\C)
\to
L^2_{-s'}(\R,\C),
\qquad
z\in\varOmega
\]
converges in the uniform operator topology
of $\scrB\big(L^2_s,L^2_{-s'}\big)$
as $z\to z_0$, $z\in\varOmega$,
\item
or there is a solution
$\langle x\rangle^{N-2}\Psi\in L^\infty(\R,\C)$
to $\big((-\jj\p_x)^3+V\big)\Psi=0$;
moreover,
if $N$ is odd,
this solution is bounded
by $\langle x\rangle^{N-3}$
for $x\le 0$.
\end{itemize}
\end{theorem}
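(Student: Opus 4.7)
The plan is to follow the construction sketched after Theorem~\ref{theorem-1}: realize the resolvent of $A+V$ as an integral operator built from three Jost solutions of $\big((-\jj\p_x)^3+V\big)u=\zeta^3 u$, and read off both alternatives from whether a single determinant vanishes at $\zeta=0$.

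Since $V$ is compactly supported, for each $\zeta$ with $0\le\arg(\zeta)\le\pi/3$ the three Jost solutions defined by
\[
\theta_1(x,\zeta)=e^{\jj\zeta x},\quad \theta_2(x,\zeta)=e^{\jj\alpha\zeta x}\ \text{for }x\gg 1,\qquad \gamma(x,\zeta)=e^{\jj\alpha^2\zeta x}\ \text{for }x\ll -1,
\]
extend analytically in $\zeta$ (and smoothly in $x$) through $\supp V$ by integrating the ODE. The matching problem at $x=y$ in \eqref{def-ux} (continuity of $u$, $u'$ and a prescribed jump of $u''$) is a $3\times 3$ linear system whose determinant is the $x$-independent Wronskian $W(\zeta)=W[\theta_1,\theta_2,\gamma](\zeta)$, and Cramer's rule produces the resolvent kernel in the form
\[
G_V(x,y;\zeta)=\frac{K(x,y;\zeta)}{W(\zeta)},
\]
with $K$ bilinear in the three Jost solutions and their derivatives evaluated at $x$ and $y$.

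The $\zeta\to 0$ limit is delicate only because $\theta_1$ and $\theta_2$ collapse pointwise to the same solution $\theta_0$ (equal to $1$ for $x\gg 1$), so $W(\zeta)=O(\zeta)$ a priori. I separate the two right-hand branches by taking the rescaled limit $\tilde\theta_0(x):=\lim_{\zeta\to 0}(\theta_2(x,\zeta)-\theta_1(x,\zeta))/(\jj(\alpha-1)\zeta)$, a solution of the homogeneous equation that equals $x$ for $x\gg 1$, and let $\gamma_0$ be the limit of $\gamma$ (equal to $1$ for $x\ll -1$). A direct expansion then gives $\zeta^{-1}W(\zeta)\to c\,W[\theta_0,\tilde\theta_0,\gamma_0]=:c\,w_0$, and the same factor of $\zeta$ comes out of $K(x,y;\zeta)$, so $G_V(\cdot,\cdot;\zeta)$ has a pointwise limit precisely when $w_0\ne 0$.

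If $w_0\ne 0$, the limiting kernel $G_V(x,y;0)$ inherits the piecewise structure of \eqref{def-ux}: each of its factors is a linear combination of $\theta_0,\tilde\theta_0,\gamma_0$ (or a derivative thereof) evaluated at $x$ or $y$, growing at most like $\langle\cdot\rangle^{N-2}=\langle\cdot\rangle$ at the relevant infinity, and the piecewise form places the faster-growing factor at the larger argument; one reads off $\abs{G_V(x,y;0)}\le C\min(\langle x\rangle,\langle y\rangle)^{N-2}$ as in \eqref{agv-n}. Combined with the analytic dependence of the Jost solutions on $\zeta$ (which promotes this pointwise bound to a uniform one for $\zeta\in\C_{+}\cap\mathbb{D}_\delta$) and the weighted-$L^2$ Schur estimate of \cite[Section 3]{virtual-levels}, this delivers the LAP \eqref{lln}, giving the first alternative. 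If instead $w_0=0$, the three limit solutions are linearly dependent, so there is a nontrivial $\Psi$ simultaneously of the form $c_0\theta_0+c_1\tilde\theta_0$ and a multiple of $\gamma_0$; hence $\Psi$ solves $\big((-\jj\p_x)^3+V\big)\Psi=0$ and is $O(\langle x\rangle^{N-2})$ as $x\to+\infty$ (being in the span of $\theta_0,\tilde\theta_0$) and $O(1)=O(\langle x\rangle^{N-3})$ as $x\to-\infty$ (being a multiple of $\gamma_0$), which is exactly the second alternative, the asymmetry reflecting the fact that for $N=3$ there are two Jost modes bounded at $+\infty$ but only one at $-\infty$. The main obstacle is the $\zeta\to 0$ bookkeeping in the middle step: verifying the exact cancellation of $\zeta$ between $K$ and $W$, identifying $\tilde\theta_0$ as a rescaled limit, and lifting the pointwise bound on $G_V(\cdot,\cdot;0)$ to the uniform bound for $\zeta$ in a small half-disk around $0$ that actually drives the LAP.
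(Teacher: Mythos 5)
Your treatment of the dichotomy itself (Jost solutions, the Wronskian $\varDelta(\zeta)$, the rescaled limit $\tilde\theta_0=\lim_{\zeta\to0}(\theta_2-\theta_1)/(\jj(\alpha-1)\zeta)$, and the identification of the virtual state as a multiple of $\gamma_1(\cdot,0)$, bounded at $-\infty$ and at most linear at $+\infty$) matches the paper's route through Lemma~\ref{lemma-41}, Corollary~\ref{corollary-no-lap} and the discussion of \eqref{g-is}. The genuine gap is in the first alternative. You assert that analyticity in $\zeta$ ``promotes'' the pointwise bound on $G_V(\cdot,\cdot;0)$ to a bound uniform in $\zeta\in\C_{+}\cap\mathbb{D}_\delta$, and that a weighted Schur estimate then ``delivers'' convergence in the uniform operator topology of $\scrB(L^2_s,L^2_{-s'})$. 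That promotion is precisely the hard step and is not automatic: away from $\zeta=0$ the Jost solutions on their ``wrong'' side carry factors $e^{\abs{\zeta}\abs{x}}$ (and the Wronskian numerators vanish at $\zeta=0$ only pointwise), so what the construction actually yields uniformly in $\zeta$ are bounds of the type \eqref{non-optimal}, with exponential weights; these give the LAP only in exponentially weighted spaces $L^2_\nu\to L^2_{-\nu}$, $\nu>3\mu$ (Lemma~\ref{lemma-lap-nu}), plus boundedness of the limiting operator in polynomial weights (Lemma~\ref{lemma-n}) --- not uniform-operator-topology convergence in $\scrB(L^2_s,L^2_{-s'})$. The paper closes exactly this gap by a different mechanism: it splits off the divergent finite-rank part of the free resolvent (the polynomial terms in \eqref{def-r0}), proves the LAP in the uniform operator topology for the rank-one regularization $A+B_0$, $B_0=\phi_0\otimes\phi_0$ (Lemma~\ref{lemma-main}, where the uniform boundedness of $R_1(\zeta)$ and the upgrade of weak to uniform convergence are established), and then transfers to $A+V$ via the abstract theory of virtual levels of \cite{virtual-levels}; that same machinery is what shows, in the second alternative, that the virtual state lies in $\ran\big((A+B_0-z_0I)^{-1}_{L^2_s,L^2_{-s'},\varOmega}\big)$ and hence is bounded as $x\to-\infty$ by \eqref{g-is}. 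Without an argument of this type, or an honest proof of a uniform-in-$\zeta$ polynomially weighted kernel bound together with an argument converting uniform boundedness into convergence, your LAP branch is incomplete.

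A secondary slip: the factors of the limiting kernel do not grow ``at most like $\langle\cdot\rangle^{N-2}$'' on their bad side. For $N=3$, when there is no virtual level, $\theta_0$ and $\tilde\theta_0$ grow \emph{quadratically} as $x\to-\infty$ and $\gamma_0$ quadratically as $x\to+\infty$, and the Wronskian factor $\{\tilde\theta_0,\theta_0\}(y)$ is also quadratic on the left; the estimate actually used in the paper is \eqref{G-estimates-weak-2}, with quadratic growth, fed into Lemma~\ref{lemma-estimates}, and it is this quadratic growth that forces the threshold $s,\,s'>N-3/2$. Your stated bound $\min(\langle x\rangle,\langle y\rangle)^{N-2}$ (echoing \eqref{agv-n} of the introduction) would, if true, give the LAP already for $s,\,s'>1/2$, which is not what the kernel satisfies; so the bookkeeping in this step needs to be redone even at $\zeta=0$.
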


While we only give a proof of this result
for $N=3$, we expect that it holds for all $N\ge 3$.

\begin{remark}
In other words,
we are saying that
if $z_0=0$ is a virtual level
relative to
$L^2_s,\,L^2_{-s'},\,\C_{+}$,
then
the corresponding virtual state
grows at most linearly as $x\to+\infty$
and remains uniformly bounded
as $x\to-\infty$;
similarly,
if $z_0$ is a virtual level
relative to
$L^2_s,\,L^2_{-s'},\,\C_{-}$,
then
the corresponding virtual state
grows at most linearly as $x\to-\infty$
and remains uniformly bounded
as $x\to+\infty$.
If there is a
nontrivial solution
to $\big((-\jj\p_x)^3+V\big)\psi=0$
such that $\langle x\rangle^{N-3}\Psi\in L^\infty(\R)$,
then $z_0=0$
is a virtual level both relative
to $\C_{+}$ and relative to $\C_{-}$;
arbitrarily small potentials
can produce eigenvalues near $z_0=0$
either in $\C_{+}$ or in $\C_{-}$.

This absence of symmetry
takes place when $N$ is odd;
one can see that in this case
Theorem~\ref{theorem-2} applies to
virtual levels at $z_0=0$
relative to $\varOmega=\C_{-}$
by simultaneously
changing the sign of $z$
and the sign of $x$.
\end{remark}

\bigskip

Here is the structure of the article.
We remind the general theory in
Section~\ref{sect-main-results}.
Properties of Jost solutions
of higher order operators
are given in Section~\ref{sect-jost-N}.
Construction of the resolvent in terms of
Jost solutions is given in
Section~\ref{sect-resolvent}.
The limiting absorption
principle at the threshold point
(in the absence of a virtual level)
-- that is, uniform resolvent estimates --
are derived in Section~\ref{sect-finite-codimension}.
While Theorem~\ref{theorem-1}
is just a reformulation of the general theory
\cite{virtual-levels} in our case,
Theorem~\ref{theorem-2} is our main result;
we prove it
at the end of Section~\ref{sect-finite-codimension}.

\section{Virtual levels and virtual states in Banach spaces}
\label{sect-main-results}

Here we remind the general theory from
\cite{virtual-levels}.
Let $\bfX$ be an infinite-dimensional complex Banach space
and let
$A\in\scrC(\bfX)$
be a closed linear operator
with dense domain $\dom(A)\subset\bfX$.
We say that $\lambda$ is from the point spectrum
$\sigma\sb{\mathrm{p}}(A)$
if
there is $\psi\in\dom(A)\setminus\{0\}$ such that $(A-\lambda I_\bfX)\psi=0$;
we say that
$\lambda$
is from the discrete spectrum
$\sigma\sb{\mathrm{d}}(A)$
if
it is an isolated point in $\sigma(A)$
and $A-\lambda I_\bfX$ is a Fredholm operator,
or, equivalently,
if the corresponding Riesz projection is of finite rank
\cite[III.5]{opus}.
We define the essential spectrum by
\begin{eqnarray}\label{def-ess}
\sigma\sb{\mathrm{ess}}(A)
=\sigma(A)\setminus\sigma\sb{\mathrm{d}}(A).
\end{eqnarray}
The definition \eqref{def-ess}
of the essential spectrum
coincides with the \emph{Browder spectrum}
$\sigma\sb{\mathrm{ess},5}(A)$
from \cite[\S I.4]{edmunds2018spectral}
(see \cite[Appendix B]{hundertmark2007exponential}
and \cite[Theorem III.125]{opus}).

\begin{definition}[Virtual levels]
\label{def-virtual}
Let $A\in\scrC(\bfX)$
and let $\varOmega\subset\C\setminus\sigma(A)$
be
such that $\sigma\sb{\mathrm{ess}}(A)\cap\p\varOmega\ne\emptyset$.
Let $\bfE$, $\bfF$ be Banach spaces
with continuous (not necessarily dense) embeddings
\[
\bfE\mathop{\longhookrightarrow}\limits\sp{\imath}
\bfX\mathop{\longhookrightarrow}\limits\sp{\jmath}\bfF.
\]
We say that
$z_0\in\sigma\sb{\mathrm{ess}}(A)\cap\p\varOmega$
is a
\emph{point of the essential spectrum of rank $r\in\N_0$ relative to
$(\varOmega,\bfE,\bfF)$}
if it is the smallest nonnegative integer
for which there is
an operator
$\calB\in\scrB_{00}(\bfF,\bfE)$
of rank $r$
such that
$\varOmega\cap\sigma(A+B)\cap\mathbb{D}_\delta(z_0)=\emptyset$
with some $\delta>0$,
where $B=\imath\circ\calB\circ\jmath\in\scrB_{00}(\bfX)$,
and such that there exists the following limit
in the weak
operator topology\footnote{or in the weak$^*$ operator topology
in the case when $\bfF$ has a pre-dual;
for details, see \cite{virtual-levels}.}
of $\scrB(\bfE,\bfF)$:
\begin{eqnarray}\label{lim}
(A+B-z_0 I_\bfX)^{-1}_{\varOmega,\bfE,\bfF}:=
\wlim\sb{z\to z_0,\,z\in\varOmega\cap\mathbb{D}_\delta(z_0)}
\jmath\circ(A+B-z I_\bfX)^{-1}\circ\imath
:\;\bfE\to\bfF.
\end{eqnarray}
\end{definition}

\begin{itemize}
\item
If $r=0$,
so that there is a limit \eqref{lim}
with $B=0$,
then $z_0$ is called
a \emph{regular point of the essential spectrum
relative to $(\varOmega,\bfE,\bfF)$};
then we say that the resolvent of $A$
\emph{satisfies the limiting absorption principle
at $z_0$
relative to $(\varOmega,\bfE,\bfF)$}.

\item
If $r\ge 1$,
then $z_0$ is called
an \emph{exceptional point of rank $r$}
relative to $(\varOmega,\bfE,\bfF)$.
We will also say that
$z_0$ is a virtual level
of rank $r$
relative to $(\varOmega,\bfE,\bfF)$.

\item
If $\Psi\in\bfF$
is in $\range\big((A+B-z_0 I_\bfX)^{-1}_{\varOmega,\bfE,\bfF}\big)$
(with $B=\imath\circ\calB\circ\jmath$, $\calB\in\scrB_{00}(\bfF,\bfE)$,
is such that the limit \eqref{lim} exists)
and satisfies $(\hatA-z_0 I_\bfF)\Psi=0$ and $\Psi\neq 0$,
then $\Psi$ is called a \emph{virtual state} of $A$
relative to $(\varOmega,\bfE,\bfF)$
corresponding to $z_0$.
(By \cite{virtual-levels},
$\range\big((A+B-z_0 I_\bfX)^{-1}_{\varOmega,\bfE,\bfF}\big)$
does not depend on
$B=\imath\circ\calB\circ\jmath$,
$\calB\in\scrB_{00}(\bfF,\bfE)$,
as long as the limit
\eqref{lim} exists.)
\end{itemize}

We assume that $A\in\scrC(\bfX)$ has a closable extension onto $\bfF$,
in the following sense:
\begin{assumption}
\label{ass-virtual}
The operator
$A\in\scrC(\bfX)$,
considered as a mapping
$\bfF\to\bfF$,
\begin{eqnarray}\label{def-a-f-f}
\dom(A\sb{\bfF\shortto\bfF}):=\jmath(\dom(A)),
\qquad
A\sb{\bfF\shortto\bfF}:\,
\Psi\mapsto\jmath(A\,\jmath^{-1}(\Psi)),
\end{eqnarray}
is closable in $\bfF$, with closure $\hatA\in\scrC(\bfF)$
and domain
$\doma\supset
\dom(A\sb{\bfF\shortto\bfF}):=\jmath\big(\dom(A)\big)$.
\end{assumption}

In the applications of the theory of virtual levels
and virtual states
to differential operators
it is useful to be able to consider
relatively compact perturbations,
allowing in place of
$\calB\in\scrB_{00}(\bfF,\bfE)$ in
Definition~\ref{def-virtual}
operators
$\calB:\,\bfF\to\bfE$
which are $\hatA$-compact.
We note that, by \cite{virtual-levels},
if $A\in\scrC(\bfX)$ satisfies Assumption~\ref{ass-virtual}
and $\calB:\,\bfF\to\bfE$ is $\hatA$-compact,
then $B=\imath\circ\calB\circ\jmath:\,\bfX\to\bfX$
is $A$-compact.

\begin{remark}
The existence of a closed extension
of $\Delta$ from $L^2$ to $L^2_{s}$,
$s\in\R$,
is proved in \cite[Appendix B]{virtual-levels}.
\end{remark}

\section{Jost solutions
for higher order differential operators}

\label{sect-jost-N}

The construction of Jost solutions
for higher order differential operators
closely follows the approach for the Schr\"odinger
operators given by
Faddeev \cite[Lemmata 1.1 -- 1.3]{faddeev1963inverse},
who attributes the approach
to Jost, Bargmann, and Levinson \cite{jost1947falschen,bargmann1949connection,levinson1949uniqueness}.
See \cite[Appendix]{faddeev1963inverse} for the story of the subject.
There are expositions by many authors,
see e.g.
\cite{MR897106,deift1979inverse,bolle1985complete,chadan1989inverse};
here, in particular, we closely follow
the treatment provided in \cite[pp.~325--326]{chadan1989inverse}.

We will use the following notations:
\[
x^\pm=\abs{x}\unity_{\R_\pm}(x),
\quad x\in\R,
\quad
\mbox{so that}
\quad
\langle x^{-}\rangle
=
\begin{cases}
\langle x\rangle,&x<0,
\\
1,&x\ge 0,
\end{cases}
\qquad
\langle x^{+}\rangle
=
\begin{cases}
1,&x\le 0,
\\
\langle x\rangle,&x>0.
\end{cases}
\]
Let $N\in\N$,
$N\ge 2$,
and 
let $V$ be a measurable complex-valued function on $\R$
and assume that there is $\mu>0$ and $C>0$ such that $V$ satisfies
\begin{align}\label{v-small}
\abs{V(x)}\le C e^{-3\mu\abs{x}},
\qquad
x\in\R.
\end{align}
Then
\begin{eqnarray}\label{first-moment}
M:=
\int\sb{\R} \langle x\rangle^{N-1}
e^{\mu\abs{x}}
\abs{V(x)}\,dx<\infty
\end{eqnarray}
and the functions
\[
M_{+}(x)=\int_x^{+\infty}e^{\mu\abs{x}}\langle y\rangle^{N-1}\abs{V(y)}\,dy,
\qquad
M_{-}(x)=\int_{-\infty}^x e^{\mu\abs{x}}\langle y\rangle^{N-1}\abs{V(y)}\,dy,
\qquad
x\in\R
\]
satisfy
\begin{align}\label{first-moment-2}
M_{+}(x)\le C_{+} e^{-\mu\abs{x}},
\quad x\ge 0;
\qquad
M_{-}(x)\le C_{-} e^{-\mu\abs{x}},
\quad x\le 0
\end{align}
with some $C_\pm>0$.

We consider the spectral problem
for the higher order Schr\"odinger operator
$A=(-\jj\p_x)^N+V$
in $L^2(\R)$
with domain $\dom(A)=H^N(\R)$:
\begin{eqnarray}\label{stationary-d1}
\big((-\jj\p_x)^N+V(x)\big)\psi = z\psi,
\qquad
\psi(x)\in\C,
\quad
x\in\R,
\quad
z\in\C.
\end{eqnarray}

Denote $\alpha=e^{\frac{2\pi}{N}\jj}$.

\begin{theorem}\label{theorem-jost}
For each
$\zeta\in\overline{\Gamma_N}$
where $\Gamma_N=
\{z\in\C\sothat
0\le \arg(\zeta)\le\pi/N\}$
and $m\in\N_0$, $m\le N-1$,
equation \eqref{stationary-d1} has
a distributional solution
$\theta_{m}(x,\zeta)$
which is continuous
for all $x\in\R$
and $\zeta\in\overline{\Gamma_N\cap\mathbb{D}_\mu}$,
continuously differentiable in $x\in\R$,
are analytic in $\zeta$
for each $x\in\R$,
and for each
$\zeta\in\overline{\Gamma_N\cap\mathbb{D}_\mu}$ satisfies the asymptotics
\begin{align}\label{as-0}
\lim\sb{x\to+\infty}
\big(\theta(x,\zeta)
-e^{\jj\alpha^m\zeta x}
\big)\to 0,
\qquad
\zeta\in\overline{\Gamma_N\cap\mathbb{D}_\mu}.
\end{align}
This solution satisfies the following estimates
for all $x\in\R$ and for all
$\zeta\in\overline{\Gamma_N\cap\mathbb{D}_\mu}$:
\begin{eqnarray}
\label{thetaestimate}
\abs{\theta_{m}(x,\zeta)}
&\le&
\langle x^{-}\rangle^{N-1}
e^{\frac{3M_{+}(x)}{2\langle\zeta\rangle^{N-1}}}
e^{\abs{\zeta}\abs{x}},
\\[1ex]
\label{thetaestimate-1}
\abs{\theta_{m}(x,\zeta)}
&\le&
e^{\fra{M_{+}(x)}{\abs{\zeta}^{N-1}}}
e^{\abs{\zeta}\abs{x}},
\quad
\zeta\ne 0,
\\[0.5ex]
\abs{\theta_{m}(x,\zeta)-e^{\jj\alpha^m\zeta x}}
&\le&
\frac{3\langle x^{-}\rangle^{N-1}}{2\langle \zeta \rangle^{N-1}}
e^{\frac{3M_{+}(x)}{2\langle\zeta\rangle^{N-1}}}
e^{\abs{\zeta}\abs{x}}
M_{+}(x).
\label{theta-1-such-0}
\end{eqnarray}
Further,
for all $x\in\R$ and
$\zeta\in\overline{\Gamma_N\cap\mathbb{D}_\mu}$,
\begin{align}
&
\abs{\p_x^{N-1}
\theta_{m}(x,\zeta)-(\jj\alpha^m\zeta)^{N-1}e^{\jj\alpha^m\zeta x}}
\nonumber
\\
&
\le
e^{\frac{3M_{+}(x)}{2\langle\zeta\rangle^{N-1}}}
\Big(
M_{+}(x)
e^{\abs{\zeta}\abs{x}}
+
\frac{3\abs{\zeta}}{2}
\int\limits_x^\infty
\langle y^{-}\rangle^{N-1}
M_{+}(y)
e^{\abs{\zeta}\abs{y}}
\,dy
\Big)
\nonumber
\\
&
\le
C_0
\begin{cases}
e^{-\mu\abs{x}}(1+\abs{\zeta}),
&x\ge 0,
\\
e^{\abs{\zeta}\abs{x}}(1+\abs{\zeta}\langle x\rangle^{N}),
&x\le 0,
\end{cases}
\label{theta-1-prime}
\end{align}
for some $C_0>0$.
More generally,
for any $k\in\N_0$, $k\le N-2$,
there is $C>0$
such that
for all $x\in\R$,
$\zeta\in\overline{\Gamma_N\cap\mathbb{D}_\mu}$,
\begin{align}
\label{theta-1-prime-2}
&\abs{\p_x^{N-1-k}
\theta_{m}(x,\zeta)-(\jj\alpha^m\zeta)^{N-1-k}e^{\jj\alpha^m\zeta x}}
\le
C
\begin{cases}
e^{-\mu\abs{x}}
(1+\abs{\zeta}),
&x\ge 0,
\\
e^{\abs{\zeta}\abs{x}}
(\langle x\rangle^k+\abs{\zeta}\langle x\rangle^{N+k}),
&x\le 0.
\end{cases}
\end{align}
\end{theorem}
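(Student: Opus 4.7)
The plan is to build $\theta_m(x,\zeta)$ as the solution of a Volterra integral equation obtained by inverting $(-\jj\p_x)^N-\zeta^N$ with a Green's function adapted to decay at $+\infty$. The unperturbed characteristic roots are $\jj\alpha^j\zeta$, $j=0,\dots,N-1$. For $\zeta\in\overline{\Gamma_N}$ let $J_{+}=\{j\sothat \Im(\alpha^j\zeta)\ge 0\}$, so $\{e^{\jj\alpha^j\zeta x}\}_{j\in J_{+}}$ are precisely the free solutions that remain bounded as $x\to+\infty$. Using the variation-of-parameters formula with Wronskian $\prod_{j<k}(\jj\alpha^j\zeta-\jj\alpha^k\zeta)$ one obtains an explicit half-line Green's kernel
\[
K_m(x,y,\zeta)\;=\;\sum_{j\in J_{+}} c_{m,j}(\zeta)\,e^{\jj\alpha^j\zeta(x-y)},\qquad y\ge x,
\]
with $c_{m,j}$ rational in $\zeta$ of degree $-(N-1)$. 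I would then recast \eqref{stationary-d1} as
\[
\theta_m(x,\zeta)=e^{\jj\alpha^m\zeta x}-\jj^{\,N}\!\!\int_x^{\infty}K_m(x,y,\zeta)\,V(y)\,\theta_m(y,\zeta)\,dy
\]
and produce the solution by the Picard/Neumann iteration starting from $\theta_m^{(0)}=e^{\jj\alpha^m\zeta x}$.

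The analytic heart of the argument is a dichotomous bound on $K_m$: from the rational representation one has the singular estimate $\abs{K_m(x,y,\zeta)}\le C\abs{\zeta}^{-(N-1)}e^{\abs{\zeta}(y-x)}$ valid for $\zeta\ne 0$, while Taylor-expanding each $e^{\jj\alpha^j\zeta(x-y)}$ around $\zeta=0$ and exploiting the vanishing sums $\sum_{j\in J_{+}}c_{m,j}(\zeta)(\alpha^j)^k=O(1)$ (the very identities that cancel the apparent pole) yields the regular estimate $\abs{K_m(x,y,\zeta)}\le C\langle y-x\rangle^{N-1}\langle\zeta\rangle^{-(N-1)}e^{\abs{\zeta}(y-x)}$ valid up to $\zeta=0$. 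Inserting these into the iteration and using that $\langle y-x\rangle^{N-1}\le 2^{N-1}\langle x^{-}\rangle^{N-1}\langle y\rangle^{N-1}$ for $y\ge x$ produces, term by term, a geometric series governed by $M_{+}(x)/\langle\zeta\rangle^{N-1}$; summation gives \eqref{thetaestimate}, \eqref{thetaestimate-1} and, in the difference $\theta_m-e^{\jj\alpha^m\zeta x}$, the extra factor $M_{+}(x)/\langle\zeta\rangle^{N-1}$ in \eqref{theta-1-such-0}. Convergence of the series is uniform on compacts in $(x,\zeta)\in\R\times\overline{\Gamma_N\cap\mathbb{D}_\mu}$, which yields continuity in $(x,\zeta)$, $C^1$ regularity in $x$, and analyticity in $\zeta$ for each fixed $x$ (each iterate is entire and the convergence is locally uniform).

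For the derivative estimates \eqref{theta-1-prime} and \eqref{theta-1-prime-2} I would differentiate the integral equation $k+1$ times in $x$; each $\p_x$ acting on the exponential factor inside $K_m$ brings down a factor $\jj\alpha^j\zeta$, so the kernels $\p_x^{N-1-k}K_m$ obey a bound of the same shape as $K_m$ but with $\langle\zeta\rangle^{-(N-1)}$ replaced by $\abs{\zeta}^k$ (or $\langle\zeta\rangle^{-(N-1)}\abs{\zeta}^{N-1-k}$ in the regular form), together with a boundary term of size $(\jj\alpha^m\zeta)^{N-1-k}e^{\jj\alpha^m\zeta x}$ that cancels the principal part and leaves $(\p_x^{N-1-k}\theta_m)(x,\zeta)-(\jj\alpha^m\zeta)^{N-1-k}e^{\jj\alpha^m\zeta x}$ represented as an integral over $[x,\infty)$. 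The two cases in the estimates then correspond exactly to two ways of bounding this integral: for $x\ge 0$ one uses the rapid decay bound \eqref{first-moment-2} on $M_{+}(x)$ to get the $e^{-\mu\abs{x}}$ factor, while for $x\le 0$ one invokes only the global bound $M_{+}(x)\le M$ and the polynomial estimate on the kernel to produce the $\langle x\rangle^{k}+\abs{\zeta}\langle x\rangle^{N+k}$ growth.

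The main obstacle, I expect, is the uniform control of $K_m(x,y,\zeta)$ across the transition $\zeta\to 0$: the individual terms $c_{m,j}(\zeta)e^{\jj\alpha^j\zeta(x-y)}$ are singular as $\zeta^{-(N-1)}$ but the sum is regular, and one must establish the cancellation identities for the first $N-1$ Taylor coefficients of $\zeta\mapsto K_m(x,y,\zeta)$ at the origin in a form precise enough to recover the sharp $\langle x^{-}\rangle^{N-1}$ factor uniformly in $\zeta\in\overline{\Gamma_N\cap\mathbb{D}_\mu}$. Once that lemma is in place, everything else is an iteration bookkeeping of Gr\"onwall type driven by the moment bound \eqref{first-moment}.
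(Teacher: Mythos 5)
There is a genuine gap at the foundation of your construction: a half-line Volterra kernel cannot be built from the subset $J_{+}$ of exponentials alone. For $u(x)=\int_x^\infty K(x-y)V(y)\theta(y)\,dy$ to satisfy $\big((-\jj\p_x)^N-\zeta^N\big)u=\const\cdot V\theta$ (so that your integral equation is equivalent to \eqref{stationary-d1}), the kernel $K(t)$, $t\le 0$ resp.\ $t\ge0$ depending on orientation, must solve the free equation and satisfy $\p_t^k K(t)\at{t=0}=0$ for $0\le k\le N-2$ together with a prescribed nonzero $(N-1)$-st derivative at $t=0$. A nontrivial combination of only $\abs{J_{+}}\approx[(N+1)/2]<N$ exponentials $e^{\jj\alpha^j\zeta t}$ cannot vanish to order $N-2\ge\abs{J_{+}}-1$ at $t=0$ (Vandermonde), so no such $K_m$ exists. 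Already for $N=2$ your equation with the single kernel $c(\zeta)e^{\jj\zeta(x-y)}$ produces terms involving $(V\theta)'$ rather than $V\theta$; the correct kernel is $\sin(\zeta(y-x))/\zeta$, which uses both exponentials, and its higher-order analogue is precisely $G_0$ of \eqref{def-g0}, containing all $N$ exponentials.

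Relatedly, the cancellation you call the analytic heart fails for the truncated kernel: the identities removing the $\zeta^{-(N-1)}$ singularity are $\frac1N\sum_{j=0}^{N-1}\alpha^{j(r+1-N)}=0$ for $0\le r\le N-2$, i.e.\ sums over \emph{all} $N$-th roots of unity (see \eqref{the-id}); partial sums over $J_{+}$ do not vanish (e.g.\ $1+\alpha\ne0$ for $N=3$), so the claimed regular bound $\abs{K_m}\le C\langle y-x\rangle^{N-1}\langle\zeta\rangle^{-(N-1)}e^{\abs{\zeta}(y-x)}$ is false and the iteration is not uniformly controlled as $\zeta\to0$. If you replace $K_m$ by $G_0(y-x,\zeta)$, your scheme essentially becomes the paper's proof (Neumann iteration with the inductive bound driven by $M_{+}$, plus the elementary $\min\{\abs{\zeta}^{-n},A^n/n!\}\le\tfrac32 A^n\langle\zeta\rangle^{-n}$ interpolation), with one further caveat: for the top-order estimate \eqref{theta-1-prime}, differentiating the integral equation $N-1$ times is delicate because the $(N-1)$-st derivative of the kernel is not small near $\zeta=0$; the paper instead writes $(-\jj\p_x)^N\theta-(\alpha^m\zeta)^N e^{\jj\alpha^m\zeta x}=-V\theta+(\alpha^m\zeta)^N(\theta-e^{\jj\alpha^m\zeta x})$ and integrates back from $+\infty$ using the already established asymptotics of the lower-order derivatives.
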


\section{Constructing the resolvent from the
Jost solutions}

\label{sect-resolvent}

We consider the equation
\begin{align}\label{dvz}
\big((-\jj\p_x)^N+V(x)\big)\psi=z\psi.
\end{align}
Let
$\zeta\in\Gamma_N$,
$\zeta^N=z\in\C_{+}$.
The fundamental solution to
$\big((-\jj\p_x)^N-\zeta^N\big)\psi=0$ is given by
\begin{align}\label{def-g0}
G_0(x,\zeta)
=
\theta(x)
\begin{cases}
\frac{\jj}{N}
\sum_{j=0}^{N-1}\frac{e^{\jj\alpha^j\zeta x}}{(\alpha^j\zeta)^{N-1}},
&\zeta\ne 0;
\\[1ex]
\jj^N
\frac{x^{N-1}}{(N-1)!},
&\zeta=0.
\end{cases}
\end{align}
We note that for any $k\in\N_0$, $k\le N$,
one has:
\begin{align}\label{g0-prime-zero}
(-\jj\p_x)^k G_0(x,\zeta)\at{x=0+}
=
\begin{cases}
0,&0\le k\le N-2,
\\
1,&k=N-1;
\end{cases}
\qquad
\zeta\in\overline{\Gamma_N}.
\end{align}
This is immediate from \eqref{def-g0} for $\zeta=0$;
for $\zeta\ne 0$,
this follows from the identity
\begin{eqnarray}\label{the-id}
\frac{1}{N}
\sum_{j=0}^{N-1}\frac{\alpha^{jr}}{\alpha^{j(N-1)}}
=
\frac{1}{N}
\sum_{j=0}^{N-1}\alpha^{j(r+1-N)}=
\begin{cases}
0,&r\in\N_0,\ r\le N-2;
\\
1,&r=N-1;
\end{cases}
\end{eqnarray}
the above relation follows
after we notice that
$\alpha^{k+1-N}$,
with $0\le k\le N-2$,
is a root of $1$ of order $N$
which is different from $1$.

For $\zeta\in\overline{\Gamma_N}$,
there are
$[(N+1)/2]$
Jost solutions
$\theta_m(x,\zeta)$
to
\[
\big((-\jj\p_x)^N+V\big)u=z u,
\qquad
z=\zeta^N\in\overline{\C_{+}},
\]
with asymptotics
$\theta_m(x,\zeta)\sim e^{\jj\alpha^m\zeta x}$
for $x\to+\infty$,
decaying (or bounded) for large positive $x$,
and
$[N/2]$
Jost solutions $\gamma_m(x,\zeta)$
decaying (or bounded) like
$\theta_m(x,\zeta)\sim e^{\jj\alpha^m\zeta x}$
for $x\to-\infty$,
satisfying bounds similar to
the ones in Theorem~\ref{theorem-jost}.

The value $z\in\C\setminus\sigma\sb{\mathrm{ess}}(A)$
is an eigenvalue
if Jost solutions
$\theta_j(x,\zeta)$, $1\le j\le [(N+1)/2]$,
and
$\gamma_j(x,\zeta)$
$1\le j\le [N/2]$,
are linearly dependent at
$\zeta\in\Gamma_N$ satisfying $\zeta^N=z$.
This happens if
\begin{align}
\det
\begin{bmatrix}
\theta_1(x,\zeta)&\theta_2(x,\zeta)&\dots&
\gamma_{N-1}(x,\zeta)&\gamma_{N}(x,\zeta)
\\
\p_x\theta_1(x,\zeta)&\p_x\theta_2(x,\zeta)&\dots&
\p_x\gamma_{N-1}(x,\zeta)&\p_x\gamma_{N}(x,\zeta)
\\
\dots
\\
\p_x^{N-1}\theta_1(x,\zeta)&\p_x^{N-1}\theta_2(x,\zeta)&\dots&
\p_x^{N-1}\gamma_{N-1}(x,\zeta)&\p_x^{N-1}\gamma_{N}(x,\zeta)
\end{bmatrix}
=0.
\end{align}

If $z=\zeta^N$ is not an eigenvalue,
then
there are $c_j(y,\zeta)$
and $k_j(y,\zeta)$ such that
\begin{align}
G(x,y;\zeta)
=\begin{cases}
\sum\sb{j}k_j(y,\zeta)\gamma_j(x,\zeta),&x\le y
\\
\sum\sb{j}c_j(y,\zeta)\theta_j(x,\zeta),&x\ge y
\end{cases}
\end{align}
is a fundamental solution to
$A-z=(-\jj\p_x)^N+V-z$,
with $\p_x^j G(x,y;\zeta)$
satisfying the continuity condition
at $x=y$ as long as $0\le j\le N-2$
and satisfying the jump condition
\begin{align}\label{jump}
(-\jj\p_x)^{N-1} G(x,y;\zeta)\at{x=y+0}
-
(-\jj\p_x)^{N-1} G(x,y;\zeta)\at{x=y-0}
=\jj.
\end{align}
Let us give the explicit construction in the case
$N=3$.
We assume that there are two Jost solutions
$\theta_1$, $\theta_2$
and one solution $\gamma_1$.
We have the following system
at $x=y$:
\[
\begin{cases}
c_1(y,\zeta)\theta_1(y,\zeta)
+c_2(y,\zeta)\theta_2(y,\zeta)
-k_1(y,\zeta)\gamma_1(y,\zeta)
=0,
\\
c_1(y,\zeta)\theta_1'(y,\zeta)
+c_2(y,\zeta)\theta_2'(y,\zeta)
-k_1(y,\zeta)\gamma_1'(y,\zeta)'
=0,
\\
c_1(y,\zeta)\theta_1''(y,\zeta)
c_2(y,\zeta)\theta_2''(y,\zeta)
-k_1(y,\zeta)\gamma_1''(y,\zeta)
=\jj^{N-1},
\end{cases}
\]
\ac{The powers of $\jj$ to be checked!!}
where primes denote derivatives with respect
to the first variable.
This can be written as
\[
\begin{bmatrix}
\theta_1&\theta_2&\gamma_1
\\
\theta_1'&\theta_2'&\gamma_1'
\\
\theta_1''&\theta_2''&\gamma_1''
\end{bmatrix}
\begin{bmatrix}c_1\\c_2\\-k_1\end{bmatrix}
=
\begin{bmatrix}0\\0\\\jj^{N-1}\end{bmatrix},
\]
hence
\begin{align}\label{cck}
\begin{bmatrix}c_1\\c_2\\-k_1\end{bmatrix}
=
\frac{1}{\varDelta(\zeta)}
\begin{bmatrix}
\cdot&\cdot&\theta_2\gamma_1'-\theta_2'\gamma_1
\\
\cdot&\cdot&\theta_1'\gamma_1-\theta_1\gamma_1'
\\
\cdot&\cdot&\theta_1\theta_2'-\theta_1'\theta_2
\end{bmatrix}
\begin{bmatrix}0\\0\\\jj^{N-1}\end{bmatrix}
=
\frac{\jj^{N-1}}{\varDelta(\zeta)}
\begin{bmatrix}
\theta_2\gamma_1'-\theta_2'\gamma_1
\\
\theta_1'\gamma_1-\theta_1\gamma_1'
\\
\theta_1\theta_2'-\theta_1'\theta_2
\end{bmatrix}
=
\frac{\jj^{N-1}}{\varDelta(\zeta)}
\begin{bmatrix}
\{\theta_2,\gamma_1\}
\\
\{\gamma_1,\theta_1\}
\\
\{\theta_1,\theta_2\}
\end{bmatrix},
\end{align}
with
\begin{align}\label{def-var-delta}
\varDelta(\zeta)
=
\det
\begin{bmatrix}
\theta_1&\theta_2&\gamma_1
\\
\theta_1'&\theta_2'&\gamma_1'
\\
\theta_1'&\theta_2''&\gamma_1''
\end{bmatrix},
\qquad
\{\theta_1,\theta_2\}=
\theta_1\p_x\theta_2
-\theta_2\p_x\theta_1.
\end{align}
\begin{remark}
We note that
$\varDelta(\zeta)$
defined in \eqref{def-var-delta}
indeed
depends only on $\zeta$ but not on $x$.
Indeed,
$\begin{bmatrix}\theta_i\\\theta_i'\\\theta_i''\end{bmatrix}$,
with $i=1,\,2$, and 
$\begin{bmatrix}\gamma_1\\\gamma_1'\\\gamma_1''\end{bmatrix}$
satisfy the equation
$
\p_x w(x,\zeta)
=
\begin{bmatrix}0&1&0\\0&0&1\\\jj^{N-1}(z-V)&0&0\end{bmatrix}
w(x,\zeta)
$
(cf. \eqref{dvz}),
and, by Liouville's formula,
$\varDelta(x,\zeta)$
satisfies
\[
\p_x
\varDelta(x,\zeta)
=
\tr
\begin{bmatrix}0&1&0\\0&0&1\\\jj^{N-1}(z-V)&0&0\end{bmatrix}
\varDelta(x,\zeta)
=0.
\]
\end{remark}

The relation \eqref{cck}
leads to
\begin{align}
G(x,y;\zeta)
&=
\begin{cases}
k_1(y,\zeta)\gamma_1(x,\zeta)
&x\le y,
\\
c_1(y,\zeta)\theta_1(x,\zeta)+c_2(y,\zeta)\theta_2(x,\zeta)
&x\ge y,
\end{cases}
\nonumber
\\
&=
\frac{\jj^{N-1}}{\varDelta(\zeta)}
\begin{cases}
\{\theta_2,\theta_1\}(y)
\gamma_1(x)
&x\le y,
\\
\{\theta_2,\gamma_1\}(y)
\theta_1(x)
+
\{\gamma_1,\theta_1\}(y)
\theta_2(x)
&x\ge y.
\end{cases}
\label{g-is}
\end{align}
In the second line,
we did not indicate explicitly the dependence
of the Jost solutions on $\zeta$.

We note that as $\zeta\to 0$,
both $\theta_1(x,\zeta)$ and $\theta_2(x,\zeta)$
pointwise converge to the same function,
$\theta_1(x,0)=\theta_2(x,0)\to 1$  as $x\to+\infty$,
hence $\varDelta(\zeta)$ vanishes at $\zeta=0$.
At the same time,
this means that
in the expression \eqref{g-is}
$\{\theta_2,\theta_1\}(y,\zeta)$ also goes to zero
(pointwise in $y$) as $\zeta\to 0$,
and
also
$
\{\theta_2,\gamma_1\}(y,\zeta)
\theta_1(x,\zeta)
+
\{\gamma_1,\theta_1\}(y,\zeta)
\theta_2(x,\zeta)
$
goes to zero (pointwise in $x$ and $y$)
as $\zeta\to 0$.
As a result,
$G(x,y;\zeta)$
remains bounded pointwise in $x$, $y$.

This suggests that the resolvent has a limit
as $\zeta\to 0$
if $\varDelta(\zeta)$ has a zero at $\zeta=0$
of order one;
this is what we are going to show
in the rest of this section.

\begin{lemma}\label{lemma-41}
Let $V\in C^\infty_{[-1,1]}(\R)$.

\begin{enumerate}
\item
Assume that
for $x>1$,
as $\zeta\to 0$,
$\gamma_1(x,\zeta)$
converges to $a+bx+cx^2$.
Then $\varDelta(\zeta)$ vanishes simply
at $\zeta=0$
if and only if $c=0$.
\item
Assume that
for $x<1$,
as $\zeta\to 0$,
$\theta_j(x,\zeta)$, $j=1,\,2$
converge to $a+bx+cx^2$.
Then $\varDelta(\zeta)$ vanishes of
order at least two
at $\zeta=0$
if $b=c=0$.
\end{enumerate}
\end{lemma}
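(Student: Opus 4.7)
The plan is to exploit the $x$-independence of $\varDelta(\zeta)$ by evaluating it outside the support of $V$, where the Jost solutions reduce to explicit exponentials. For part (i) we evaluate at any fixed $x>1$, where $\theta_1(x,\zeta)=e^{\jj\zeta x}$ and $\theta_2(x,\zeta)=e^{\jj\alpha\zeta x}$ hold exactly; for part (ii) we evaluate at any fixed $x<-1$, where $\gamma_1(x,\zeta)=e^{\jj\alpha^2\zeta x}$ holds exactly. In both parts the computation then reduces to Taylor expansion in $\zeta$ of the columns of a $3\times 3$ matrix with explicit entries, together with a multilinear splitting of the determinant.

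For part (i), at $\zeta=0$ both $\theta_j$-columns become $(1,0,0)^{\top}$, so two columns coincide and $\varDelta(0)=0$ automatically. To compute $\varDelta'(0)$ we use multilinearity of $\det$ to split it as a sum of three determinants, one per column replaced by its $\zeta$-derivative at $0$. Differentiating the $\gamma_1$-column leaves the two $\theta_j$-columns equal and contributes $0$. Differentiating a $\theta_j$-column produces a $3\times 3$ determinant whose bottom row, with $\gamma_1''(x,0)=2c$ as its sole nonzero entry, forces an overall factor of $c$ after expansion along that row. Adding the two nonzero contributions (which work out to $-2\jj c$ and $2\jj c\alpha$) yields $\varDelta'(0)=2\jj c(\alpha-1)$; since $\alpha\ne 1$, the order of vanishing of $\varDelta$ at $\zeta=0$ is controlled by whether $c$ vanishes.

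For part (ii), at $x<-1$ the $\gamma_1$-column at $\zeta=0$ is $(1,0,0)^{\top}$, while under the hypothesis $b=c=0$ the common limit $\theta_0(x)=a$ is constant on $x<-1$, so both $\theta_j$-columns at $\zeta=0$ equal $(a,0,0)^{\top}$ and are therefore parallel to the $\gamma_1$-column. Splitting $\varDelta'(0)$ by multilinearity into three determinants indexed by which column is differentiated, each term has two linearly dependent columns among the undifferentiated ones — either the two coinciding $\theta_j$-columns, or a $\theta_j$-column parallel to the $\gamma_1$-column — and hence vanishes. Therefore $\varDelta'(0)=0$, so $\varDelta$ vanishes at $\zeta=0$ of order at least two.

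The main obstacle is purely organizational: one has to Taylor expand the exponential Jost solutions together with their first two $x$-derivatives to the appropriate order in $\zeta$ and keep careful track of which pairs of columns remain linearly dependent at each order. Beyond this bookkeeping, the proof is a short $3\times 3$ determinant computation, and the same column-degeneracy idea handles both parts in parallel.
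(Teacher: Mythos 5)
Your proof is correct and takes essentially the same route as the paper: both evaluate the $x$-independent determinant $\varDelta$ outside the support of $V$ (at $x>1$ for part (i), at $x<-1$ for part (ii)), where the relevant Jost solutions are explicit exponentials, and expand to first order in $\zeta$ — your multilinear computation of $\varDelta'(0)$ is just a repackaging of the paper's direct column manipulation, giving the same leading coefficient $2\jj c(\alpha-1)$ in part (i) and the same column degeneracy under $b=c=0$ in part (ii). Note that, exactly like the paper's own computation, this shows $\varDelta$ vanishes simply precisely when $c\neq 0$ (so the ``if and only if $c=0$'' in the statement is evidently a typo), and your passage from $\varDelta(0)=\varDelta'(0)=0$ to ``order at least two'' quietly uses the $\zeta$-regularity (analyticity) of the Jost solutions, which the paper also relies on via its $O(\zeta)$, $O(\zeta^2)$ bookkeeping.
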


\begin{proof}
This is a direct computation.
For $x>1$
one has
$\theta_1(x,\zeta)=e^{\jj\zeta x}$,
$\theta_2(x,\zeta)=e^{\jj\alpha\zeta x}$,
$\gamma_1=a+bx+cx^2+O(\zeta)$,
hence
\begin{align*}
&
\det
\begin{bmatrix}
\theta_1&\theta_2&\gamma_1
\\
\theta_1'&\theta_2'&\gamma_1'
\\
\theta_1''&\theta_2''&\gamma_1''
\end{bmatrix}
=
\det
\begin{bmatrix}
\theta_1&\theta_2&a+bx+cx^2+O(\zeta)
\\
\jj\zeta\theta_1&\jj\alpha\zeta\theta_2&b+2cx+O(\zeta)
\\
-\zeta^2\theta_1&-\alpha^2\zeta^2\theta_2&2c+O(\zeta)
\end{bmatrix}
\\
&=
\theta_1\theta_2
\det
\begin{bmatrix}
1&1&a+bx+cx^2+O(\zeta)
\\
\jj\zeta&\jj\alpha\zeta&b+2cx+O(\zeta)
\\
0&0&2c+O(\zeta)
\end{bmatrix}
+O(\zeta^2)
=
2c\theta_1\theta_2
\jj(\alpha-1)\zeta
+O(\zeta^2).
\end{align*}

For $x<-1$
one has
$\gamma(x,\zeta)=e^{\jj\alpha^2\zeta x}$,
$\theta_1(x,\zeta),
\,\theta_2=A+Bx+Cx^2+O(\zeta)$
(with the same $A,\,B,\,C$ but different
remainder $O(\zeta)$),
hence
\begin{align*}
&
\det
\begin{bmatrix}
\theta_1&\theta_2&\gamma_1
\\
\theta_1'&\theta_2'&\gamma_1'
\\
\theta_1''&\theta_2''&\gamma_1''
\end{bmatrix}
=
\det
\begin{bmatrix}
\gamma_1
&A+Bx+Cx^2+O(\zeta)&A+Bx+Cx^2+O(\zeta)
\\
\jj\alpha^2\zeta\gamma_1
&B+2Cx+O(\zeta)&B+2Cx+O(\zeta)
\\
0&2C+O(\zeta)&2C+O(\zeta)
\end{bmatrix}
+O(\zeta^2)
\\
&=
\gamma_1
\det
\begin{bmatrix}
B+2Cx+O(\zeta)&B+2Cx+O(\zeta)
\\
2C+O(\zeta)&2C+O(\zeta)
\end{bmatrix}
+O(\zeta^2).
\end{align*}
\end{proof}

\begin{corollary}
\label{corollary-no-lap}
Let $N=3$.
If $V\in C_{\mathrm{comp}}(\R,\C)$
and if there is a solution
$\Psi(x)$ to $\big((-\jj\p_x)^N+V\big)u=0$
such that
$\Psi(x)\to 1$ as $x\to\pm\infty$,
$\langle x\rangle^{2-N}\Psi\in L^\infty(\R,\C)$,
then $\zeta=0$ is a virtual level
relative to $L^2_\nu,\,L^2_{-\nu},\,\C_{+}$,
for arbitrarily large $\nu>0$,
in the sense that
$\big((-\jj\p_x)^N+V-z I\big)^{-1}$
does not have a limit as $z\to 0$,
$\Im z>0$,
in the weak topology of
$\scrB\big(L^2_{\nu}(\R,\C),L^2_{-\nu}(\R,\C)\big)$.
\end{corollary}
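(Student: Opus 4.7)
The plan is to pin down the threshold limits of the Jost solutions via the hypothesis on $\Psi$, invoke Lemma~\ref{lemma-41} to conclude that $\varDelta(\zeta)$ has a zero of order at least two at $\zeta=0$, and then exhibit compactly supported test functions on opposite sides of $\supp V$ for which an explicit computation shows that the corresponding matrix element of the resolvent blows up as $\zeta\to 0$.

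\textbf{Identifying $\Psi$ with $\theta_0=\gamma_0$.} Fix $R>0$ with $\supp V\subset[-R,R]$. On $|x|>R$ the equation $((-\jj\p_x)^3+V)u=0$ reduces to $u'''=0$, so every solution is a quadratic polynomial on each half-line. Because $\theta_m(x,\zeta)=e^{\jj\alpha^m\zeta x}$ for $x>R$ and $\gamma_1(x,\zeta)=e^{\jj\alpha^2\zeta x}$ for $x<-R$, the pointwise $\zeta\to 0$ limits satisfy $\theta_0\equiv 1$ on $\{x>R\}$ and $\gamma_0\equiv 1$ on $\{x<-R\}$. The hypothesis $\Psi(x)\to 1$ as $x\to\pm\infty$, combined with the quadratic form of solutions outside $[-R,R]$, forces $\Psi\equiv 1$ on both $\{x>R\}$ and $\{x<-R\}$; uniqueness for the initial-value problem for the third-order ODE then yields $\Psi=\theta_0=\gamma_0$ on $\R$. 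In particular, the quadratic coefficient $c$ in the $\zeta\to 0$ expansion of $\gamma_1$ on $\{x>R\}$ vanishes, so the computation in the proof of Lemma~\ref{lemma-41}(1) gives $\varDelta(\zeta)=O(\zeta^2)$.

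\textbf{Divergent matrix element.} I will choose $\phi\in C_c^\infty((R,R+1))$ and $\psi\in C_c^\infty((-R-2,-R-1))$, each nonnegative and nontrivial. For $x\in\supp\psi$ and $y\in\supp\phi$ we have $x<y$, so by \eqref{g-is}
\begin{equation*}
G(x,y;\zeta)=\frac{\jj^{N-1}}{\varDelta(\zeta)}\{\theta_2,\theta_1\}(y,\zeta)\,\gamma_1(x,\zeta).
\end{equation*}
For $y>R$, the closed forms $\theta_1=e^{\jj\zeta y}$, $\theta_2=e^{\jj\alpha\zeta y}$ yield $\{\theta_2,\theta_1\}(y,\zeta)=\jj(1-\alpha)\zeta\,e^{\jj(1+\alpha)\zeta y}=\jj(1-\alpha)\zeta+O(\zeta^2)$ uniformly on $\supp\phi$; on $\supp\psi\subset\{x<-R\}$, $\gamma_1(x,\zeta)=e^{\jj\alpha^2\zeta x}\to 1$ uniformly. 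Setting $R(\zeta):=((-\jj\p_x)^3+V-\zeta^3 I)^{-1}$ and integrating against $\bar\psi(x)\phi(y)$,
\begin{equation*}
\langle\psi,R(\zeta)\phi\rangle=\frac{\jj^{N-1}\jj(1-\alpha)\zeta}{\varDelta(\zeta)}\Big(\textstyle\int\bar\psi\Big)\Big(\textstyle\int\phi\Big)\,(1+o(1))\quad\text{as }\zeta\to 0.
\end{equation*}
Since $\varDelta(\zeta)=O(\zeta^2)$, $|\zeta/\varDelta(\zeta)|\to\infty$ as $\zeta\to 0$ in $\Gamma_3$, and the constant prefactor is nonzero, so $|\langle\psi,R(\zeta)\phi\rangle|\to\infty$. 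This precludes any weak limit of $R(\zeta)$ in $\scrB(L^2_\nu,L^2_{-\nu})$.

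\textbf{The principal obstacle} will be to ensure that the $O(\zeta^2)$ corrections in the numerator cannot conspire to cancel the leading behavior of $\varDelta$; this is precisely why the test functions are placed strictly outside $\supp V$, where the Jost solutions and their Wronskians admit closed-form exponential expansions and the leading coefficient $\jj(1-\alpha)\int\bar\psi\int\phi$ is manifestly nonzero. A minor secondary point to check is that $\varDelta\not\equiv 0$, which holds because for generic $z$ in the resolvent set the three Jost solutions $\theta_1,\theta_2,\gamma_1$ are linearly independent.
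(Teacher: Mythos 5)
Your proof is correct and follows the same route as the paper's (very terse) argument: use the formula \eqref{g-is} for the resolvent kernel, observe that the Wronskian $\{\theta_2,\theta_1\}(y,\zeta)$ vanishes to exactly first order in $\zeta$ while the hypothesis on $\Psi$ forces $\varDelta(\zeta)$ to vanish to order at least two, and conclude that the ratio blows up. What you do better than the paper's one-sentence proof is make this rigorous: you correctly appeal to the displayed determinant computation in the proof of Lemma~\ref{lemma-41}(1), namely $\varDelta(\zeta)=2c\,\theta_1\theta_2\,\jj(\alpha-1)\zeta+O(\zeta^2)$, rather than the stated conclusion of that lemma (which reads ``vanishes simply if and only if $c=0$'' and appears to carry a sign typo — the computation in fact shows ``simply iff $c\neq 0$''); and by placing the test functions $\phi,\psi$ strictly outside $\supp V$ on opposite sides, you get closed exponential forms for the Jost solutions so that the leading coefficient $\jj(1-\alpha)\int\bar\psi\int\phi$ is manifestly nonzero and no cancellation with higher-order corrections is possible. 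Your aside that $\varDelta\not\equiv 0$ is also a real point the paper skips; it follows since $\C_{+}$ is in the resolvent set of $(-\jj\p_x)^3+V$ for compactly supported $V$ (the essential spectrum being $\R$ and complex eigenvalues being discrete). In short: same approach, more careful execution.
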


Above,
for $\nu\in\R$,
\[
L^2_\nu(\R,\C)
=\big\{
f\in L^2_{\mathrm{loc}}(\R,\C)
\sothat
e^{\nu\abs{x}}f\in L^2(\R,\C)
\big\},
\qquad
\norm{f}_{L^2_\nu}
=
\norm{e^{\nu\abs{x}}f}_{L^2}.
\]

\begin{proof}
It suffices to notice that
in \eqref{g-is}
the coefficient
$\{\theta_1,\theta_2\}(y,\zeta)$
vanishes simply in $\zeta$ as $\zeta\to 0$,
while the denominator as at $\zeta=0$
the zero of order at least two
(cf. Lemma~\ref{lemma-41})
if and only if
there is the solution $\Psi$ as specified in the lemma.
\end{proof}

We take into account
the estimates from Theorem~\ref{theorem-jost},
\[
\abs{
 \p_x^{N-1-k}\theta_m(x,\zeta)-(\jj\alpha^m\zeta)^{N-1-k}
e^{\jj\alpha^m \zeta x}
}
\le
C
\begin{cases}
e^{-\mu\abs{x}}
(1+\abs{\zeta}),
&x\ge 0,
\\
e^{\abs{\zeta}\abs{x}}
(\langle x\rangle^k+\abs{\zeta}\langle x\rangle^{N+k}),
&x\le 0,
\end{cases}
\]
concluding that there is $C>0$ such that
for $x\in\R$ and
$\zeta\in\overline{\Gamma_N\cap\mathbb{D}_\mu}$
one has:
\[
\abs{\p_x^{N-1-k}\theta_m(x,\zeta)}
\le
C
\begin{cases}
e^{-x\Im(\alpha^m\zeta)}(1+\abs{\zeta}),
&x\ge 0,
\\
e^{\abs{\zeta}\abs{x}}
(\langle x\rangle^k+\abs{\zeta}\langle x\rangle^{N+k}),
&x\le 0.
\end{cases}
\]
In particular,
\[
\abs{\p_x\theta_m(x,\zeta)}
\le
C
\begin{cases}
e^{-x\Im(\alpha^m\zeta)}
(1+\abs{\zeta}),
&x\ge 0,
\\
e^{\abs{\zeta}\abs{x}}
(\langle x\rangle^{N-1}+\abs{\zeta}\langle x\rangle^{2N-1}),
&x\le 0.
\end{cases}
\]
Also, by \eqref{thetaestimate-1},
\[
\abs{\theta_m(x,\zeta)}
\le
C
\begin{cases}
e^{-x\Im(\alpha^m\zeta)},
&x\ge 0,
\\
e^{\abs{\zeta}\abs{x}}
\langle x\rangle^{N-1},
&x\le 0.
\end{cases}
\]
We also have
\begin{align}\label{hospital}
\abs{\p_\zeta\theta_m(x,\zeta)}
\le
C
\langle x\rangle
\begin{cases}
e^{-x\Im(\alpha^m\zeta)},
&x\ge 0,
\\
e^{\abs{\zeta}\abs{x}}
\langle x\rangle^{N-1},
&x\le 0;
\end{cases}
\end{align}
the estimate \eqref{hospital}
is obtained in the same way
as other estimates on Jost solutions;
we note that
the estimate holds
with some $C>0$
for $x,\,y,\,\zeta$ from a compact set,
while for $x$ large,
$\theta_m$ is a linear combination
of $e^{\jj\alpha_k\zeta x}$,
so a derivative in $\zeta$ produces a factor of $x$.

The above estimates
allow us to write the following (non-optimal) bound
on $G(x,y;\zeta)$:
for some $C>0$ and for
$\zeta\in\overline{\Gamma_N\cap\mathbb{D}_\mu}$,
\begin{align}\label{non-optimal}
\abs{G(x,y;\zeta)}
\le
C
\begin{cases}
e^{2\abs{\zeta}\abs{x}}\langle x\rangle^{3N-2}
,&0\le x\le y,
\\
1
,&x\le 0\le y,
\\
e^{2\abs{\zeta}\abs{y}}
\langle y\rangle^{3N-2}
,&x\le y\le 0;
\\
e^{2\abs{\zeta}\abs{y}}
\langle y\rangle^{3N-2}
,&0\le y\le x,
\\
1
,&y\le 0\le  x,
\\
e^{2\abs{\zeta}\abs{x}}\langle x\rangle^{3N-2}
,& y\le x\le 0.
\end{cases}
\end{align}
We note that
the factor
$\langle x\rangle+\langle y\rangle$
comes from
applying l'H\^{o}pital's rule
to \eqref{g-is}, in the form
\[
\Abs{
\frac{\{\theta_2,\theta_1\}(y)\gamma_1(x)}
{\varDelta(\zeta)}
}
\le
\Abs{
\frac{\p_\zeta\{\theta_2,\theta_1\}(y)\gamma_1(x)}
{\p_\zeta\varDelta\zeta}
}
\le
C\Abs{
\p_\zeta\{\theta_2,\theta_1\}(y)\gamma_1(x)},
\]
and using
\eqref{hospital}.
The estimates
\eqref{non-optimal}
are not optimal,
but they are uniform for
$\zeta\in\overline{\Gamma_N\cap\mathbb{D}_\mu}$.
They prove the following result:

\begin{lemma}
\label{lemma-lap-nu}
Assume that $\varDelta(\zeta)$ vanishes simply at $\zeta=0$.
Then the resolvent $(A-z I)^{-1}$, $z\in\C_{+}$,
is uniformly bounded for
$z\in\C_{+}\cap\mathbb{D}_\mu$ and converges,
as $z\to z_0$,
in the uniform operator topology
of
$L^{2}_\nu(\R,\C)\to L^{2}_{-\nu}(\R,\C)$,
with $\nu>3\mu$.
\end{lemma}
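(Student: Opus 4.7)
The plan is to work directly with the explicit representation \eqref{g-is} of the integral kernel $G(x,y;\zeta)$ of $(A-zI)^{-1}$, combined with the uniform pointwise bound \eqref{non-optimal}, and then to pass to the Hilbert--Schmidt norm in weighted $L^2$ spaces. The assumption that $\varDelta(\zeta)$ vanishes only simply at $\zeta=0$ is what makes the quotient in \eqref{g-is} extend continuously to the origin.

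First I would observe that the numerators in \eqref{g-is} also vanish at $\zeta=0$: since $\theta_1(x,\zeta)$ and $\theta_2(x,\zeta)$ converge to the same limit as $\zeta\to 0$, the Wronskian $\{\theta_1,\theta_2\}(y,\zeta)$ tends to $0$, and similarly the combination $\{\theta_2,\gamma_1\}(y,\zeta)\theta_1(x,\zeta)+\{\gamma_1,\theta_1\}(y,\zeta)\theta_2(x,\zeta)$ vanishes. Combined with the simple zero of $\varDelta$, l'H\^opital's rule (using analyticity in $\zeta$ of $\theta_m$, $\gamma_1$, and $\varDelta$ from Theorem~\ref{theorem-jost}) gives that $G(x,y;\zeta)$ admits a continuous extension to all of $\overline{\Gamma_N\cap\mathbb{D}_\mu}$ for each fixed $(x,y)$, and the bound \eqref{non-optimal} holds uniformly in $\zeta$ there; the factor $\langle\cdot\rangle^{3N-2}$ absorbs both the $\p_\zeta$-derivative of the Jost solution (which picks up a polynomial factor as in \eqref{hospital}) and the polynomial growth of the Jost solution on the side where it is unbounded.

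Second, I would identify the operator norm $\|(A-zI)^{-1}\|_{L^2_\nu\to L^2_{-\nu}}$ with the $L^2\to L^2$ norm of the reweighted kernel $K_\zeta(x,y):=e^{-\nu|x|}G(x,y;\zeta)e^{-\nu|y|}$ and verify that $K_\zeta$ is Hilbert--Schmidt by splitting $\iint|K_\zeta|^2\,dx\,dy$ into the six cases of \eqref{non-optimal}. In each case, only one of $|x|,|y|$ carries the factor $e^{2|\zeta|\cdot}\langle\cdot\rangle^{3N-2}$; with $|\zeta|\le\mu$, the weight $e^{-2\nu|\cdot|}$ contributes $e^{-(2\nu-4\mu)|\cdot|}$ of decay, which, since $\nu>3\mu$, leaves a margin $e^{-2\mu|\cdot|}$ that dominates any polynomial. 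The integration in the second variable (where $G$ has only a bounded or polynomially-growing Jost factor multiplied by $e^{-\nu|\cdot|}$) converges for the same reason. This yields a $\zeta$-independent Hilbert--Schmidt bound $\|K_\zeta\|_{\mathrm{HS}}\le C$ for $\zeta\in\overline{\Gamma_N\cap\mathbb{D}_\mu}$, proving uniform boundedness of the resolvent.

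Third, for convergence as $z\to 0$ in the uniform operator topology, I would apply dominated convergence to $|K_\zeta-K_0|^2$: pointwise convergence in $(x,y)$ follows from the continuous extension of $G$ established in the first step, while the integrable majorant $2|K_\zeta|^2+2|K_0|^2$ is dominated uniformly by the Hilbert--Schmidt majorant from the second step. Hence $\|K_\zeta-K_0\|_{\mathrm{HS}}\to 0$, which upgrades to norm convergence in $\scrB(L^2_\nu,L^2_{-\nu})$. The main obstacle is the bookkeeping in the Hilbert--Schmidt calculation: one must track carefully, in each of the six regions, which Jost solution contributes which exponential or polynomial factor, and ensure that the margin $\nu>3\mu$ really does absorb the $e^{2|\zeta|\cdot}\langle\cdot\rangle^{3N-2}$ uniformly in $\zeta\in\overline{\Gamma_N\cap\mathbb{D}_\mu}$; a minor secondary point is that the l'H\^opital argument should give not merely pointwise but jointly continuous limits, which requires combining the analyticity of the Jost solutions in $\zeta$ (as a function on $\overline{\Gamma_N\cap\mathbb{D}_\mu}$) with the uniform bounds in Theorem~\ref{theorem-jost}.
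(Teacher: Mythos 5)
Your proposal is correct and follows essentially the same route as the paper: the paper deduces the lemma directly from the uniform pointwise bound \eqref{non-optimal}, obtained from the representation \eqref{g-is} via l'H\^opital's rule and \eqref{hospital} under the simple-zero assumption on $\varDelta(\zeta)$, exactly as in your first step. Your second and third steps merely make explicit the functional-analytic conversion (Hilbert--Schmidt bound for the kernel reweighted by $e^{-\nu\abs{x}}$, $e^{-\nu\abs{y}}$ with $\nu>3\mu$, then dominated convergence to get norm convergence) that the paper leaves implicit, and this bookkeeping is carried out correctly.
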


In the limit $\zeta\to 0$,
$0<\arg(\zeta)<\pi/3$,
the integral kernel
$G(x,y;\zeta)$ converges pointwise to
$G_0(x,y)$ which satisfies the following bounds:
\begin{align}\label{G-estimates-weak-2}
\abs{G_0(x,y)}
\le
C
\min(\langle x\rangle^2,\langle y\rangle^2),
\end{align}
with some $C>0$.
We notice that $\theta(x,\zeta)\sim e^{\jj\alpha^m\zeta x}$
for $x\to+\infty$
which converges to $\theta_0(x)=1$ for $x\gg 1$
will have the asymptotics 
$\theta_0(x)=a+bx+cx^2+o(1)$ for $x\to-\infty$,
with some $a,\,b,\,c\in\C$;
Similarly, $\gamma(x,\zeta)\sim e^{\jj\alpha^m\zeta x}$
for $x\to-\infty$
which converges to $\gamma_0(x)=1$ for $x\ll -1$
will have the asymptotics
$\gamma_0(x)=a'+b'x+c'x^2+o(1)$ for $x\to+\infty$,
with some $a',\,b',\,c'\in\C$;
Similarly, a function
$\Theta_0(x)$
which has an asymptotic
$\Theta(x)=x+o(1)$ for $x\to+\infty$
will have the asymptotics 
$\Theta_0(x)=A+Bx+Cx^2+o(1)$ for $x\to-\infty$,
with some $A,\,B,\,C\in\C$.
One can now construct $G_0$ out of $\theta_0$, $\gamma_0$,
and $\Theta_0$.
We also notice that $\Theta_0\sim x$ for $x\gg 1$
can be obtained as a pointwise limit
of $\theta_1(x,\zeta)$ and $\theta_2(x,\zeta)$
by taking particular coefficients;
in particular,
as $\zeta\to 0$,
$\frac{e^{\jj\zeta x}-e^{\jj\alpha\zeta x}}{\jj(1-\alpha)\zeta}$
converges pointwise to $x$.

This allows to conclude with the following lemma:

\begin{lemma}\label{lemma-n}
Assume that
$\varDelta(\zeta)$
vanishes simply at $\zeta=0$.
Then
$R_0=\wlim\sb{z\to 0}(A-z I)^{-1}:\;
L^2_s(\R,\C)\to L^2_{-s}(\R,\C)$,
$s>3\mu$
extends to a continuous mapping
\[
L^2_{s}(\R,\C)
\to
L^2_{-s'}(\R,\C),
\qquad
s,\,s'>N-3/2,
\qquad
s+s\ge N.
\]
\end{lemma}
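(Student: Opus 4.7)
The plan is to produce the extension by hand as the integral operator
\[
T_0 f(x) = \int_\R G_0(x,y) f(y)\,dy, \qquad x \in \R,
\]
whose kernel $G_0(x,y) = \lim\sb{\zeta \to 0}G(x,y;\zeta)$ is the pointwise limit constructed in the discussion preceding the lemma, and then to carry out two essentially independent steps: first, establish $T_0 \in \scrB(L^2_s, L^2_{-s'})$ directly from the pointwise bound \eqref{G-estimates-weak-2} via Schur's test; second, identify $T_0$ with the previously constructed weak limit $R_0$ on a dense subset.

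For the first step, I will apply Schur's test to the weighted kernel $K(x,y) = \langle x\rangle^{-s'}|G_0(x,y)|\langle y\rangle^{-s}$ regarded as an integral operator on $L^2(\R)$. Splitting the $y$-integration into the regions $|y|\le|x|$ and $|y|>|x|$ and using \eqref{G-estimates-weak-2} should yield
\[
\int_\R |G_0(x,y)|\langle y\rangle^{-s}\,dy \lesssim \int_{|y|\le|x|}\langle y\rangle^{2-s}\,dy + \langle x\rangle^2\int_{|y|>|x|}\langle y\rangle^{-s}\,dy \lesssim \langle x\rangle^{\max(3-s,0)}
\]
(up to a logarithm at the boundary $s=3$); multiplying by $\langle x\rangle^{-s'}$ gives $\int K(x,y)\,dy \lesssim \langle x\rangle^{-s'+\max(3-s,0)}$, which is uniformly bounded in $x$ provided $s+s' \ge 3 = N$. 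A symmetric calculation produces the bound for $\int K(x,y)\,dx$, closing Schur's test. Since $s,s' > N - 3/2 = 3/2$, the strict inequality $s + s' > N$ holds automatically, so the Schur suprema will be strictly finite and I can conclude $T_0 \in \scrB(L^2_s, L^2_{-s'})$.

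For the second step, I will take $f, \varphi \in C_{\mathrm{comp}}(\R,\C)$ and use the uniform kernel estimate \eqref{non-optimal}, which on the compact support of $\overline{\varphi}\otimes f$ provides a $\zeta$-independent integrable majorant for $\overline{\varphi(x)}G(x,y;\zeta)f(y)$. Dominated convergence together with Lemma~\ref{lemma-lap-nu} should then give
\[
\langle R_0 f, \varphi\rangle = \lim\sb{z \to 0}\iint \overline{\varphi(x)}G(x,y;\zeta)f(y)\,dx\,dy = \iint \overline{\varphi(x)}G_0(x,y)f(y)\,dx\,dy = \langle T_0 f, \varphi\rangle,
\]
and density of $C_{\mathrm{comp}}(\R,\C)$ in $L^2_s(\R,\C)$ will identify $T_0$ as the unique continuous extension of $R_0$.

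The main obstacle is the Schur bookkeeping at the boundary exponent $s+s' = N$: one must track the near-field contribution $|y| \le |x|$ and the far-field contribution $|y| > |x|$ separately, and verify that the strict inequality needed by Schur's test is automatic under $s,s' > N - 3/2$. This is precisely the redundancy of the constraint $s+s' \ge N$ for $N \ge 3$ noted after \eqref{lln}.
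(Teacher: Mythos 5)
Your proposal is correct, but the boundedness step goes by a different route than the paper. The paper funnels the estimate \eqref{G-estimates-weak-2} through Lemma~\ref{lemma-estimates}: it splits $G$ according to the signs of $x$ and $y$, reduces to the model kernels $\calK_1,\calK_2$ of \eqref{def-t-kernel-2}, and then proves boundedness by Cotlar--Stein almost orthogonality of a dyadic partition of the kernel; that machinery is aimed at the genuine endpoint $s+s'=N$ with $s,\,s'>1/2$ (the range relevant for \eqref{lln} for general $N$), where a plain Schur test breaks down because the far-field integral $\int_{|y|>|x|}\langle y\rangle^{-s}\,dy$ diverges once $\min(s,s')\le 1$. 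You instead apply Schur's test directly to the weighted kernel $\langle x\rangle^{-s'}\abs{G_0(x,y)}\langle y\rangle^{-s}$, and this is legitimate precisely in the regime the lemma actually claims: with $s,\,s'>N-3/2=3/2$ both weights exceed $1$, so both near- and far-field integrals converge and the row/column sums are uniformly bounded since $s+s'>3$ holds automatically (your bookkeeping $\int K(x,y)\,dy\lesssim\langle x\rangle^{-s'+\max(3-s,0)}$, with the harmless logarithm at $s=3$, is accurate). So your argument is more elementary and fully adequate for the statement at hand, while the paper's almost-orthogonality argument buys the sharper endpoint range that would matter for smaller weights (e.g.\ $N=2$). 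Your second step --- identifying the Schur-bounded operator $T_0$ with the weak limit $R_0$ by testing against $C_{\mathrm{comp}}$ functions, using the $\zeta$-uniform bound \eqref{non-optimal} as a dominating function on compact sets together with the pointwise convergence $G(x,y;\zeta)\to G_0(x,y)$ and Lemma~\ref{lemma-lap-nu}, then invoking density --- is a point the paper leaves implicit, and spelling it out is a genuine (if small) improvement in rigor.
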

(The second inequality is redundant if $N\ge 3$.)

The above lemma follows from the estimate
\eqref{G-estimates-weak-2}
and the following result:

\begin{lemma}
\label{lemma-estimates}
Assume that
the integral kernel of the operator
$G:\,\scrD(\R,\C)\to\scrD'(\R,\C)$
satisfies the estimate
\[
\abs{\calK(G)(x,y)}\le C
\max(\langle x\rangle,\langle y\rangle)^{N-1},
\qquad
x,\,y\in\R.
\]
Then $G$
extends to a continuous mapping
\[
L^2_{s}(\R,\C)
\to
L^2_{-s'}(\R,\C),
\qquad
s,\,s'>N-3/2,
\qquad
s+s\ge N.
\]
\end{lemma}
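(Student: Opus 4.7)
I would prove the statement via Schur's test applied to a weighted kernel. First, a preliminary remark about the hypothesis: the bound actually established in the preceding analysis (cf.\ \eqref{G-estimates-weak-2}) is the $\min$-version $|G_0(x,y)|\le C\min(\langle x\rangle,\langle y\rangle)^{N-1}$, and my argument relies on this reading. A direct check shows that the literal $\max$-bound, combined with $s,s'>N-3/2$, is not enough to even make $Gf(x)$ pointwise finite for generic $f\in L^2_s$ (the tail $\int_{|y|>|x|}\langle y\rangle^{N-1}|f(y)|\,dy$ diverges unless $s>N-1/2$). Hence I read the hypothesis with $\min$; under the $\min$-bound, the proof gives exactly the stated thresholds.

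Introduce the conjugated kernel
\[
\tilde K(x,y):=\langle x\rangle^{-s'}\,|\calK(G)(x,y)|\,\langle y\rangle^{-s}.
\]
Continuity $G:L^2_s\to L^2_{-s'}$ is equivalent to $L^2(\R)\to L^2(\R)$-boundedness of the operator with kernel $\tilde K$, via the isometries $f\mapsto\langle\cdot\rangle^s f:L^2_s\to L^2$ and $g\mapsto\langle\cdot\rangle^{-s'}g:L^2\to L^2_{-s'}$. I apply Schur's test with the unit weight:
\[
A:=\sup_x\int_\R|\tilde K(x,y)|\,dy<\infty,\qquad B:=\sup_y\int_\R|\tilde K(x,y)|\,dx<\infty.
\]

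For the bound on $A$, split at $|y|=|x|$. On $\{|y|\le|x|\}$, $\min(\langle x\rangle,\langle y\rangle)^{N-1}=\langle y\rangle^{N-1}$, contributing $\langle x\rangle^{-s'}\int_{|y|\le|x|}\langle y\rangle^{N-1-s}\,dy$, which is $O(\langle x\rangle^{N-s-s'})$ when $s<N$ and $O(\langle x\rangle^{-s'})$ when $s\ge N$ (with at most a logarithmic factor at $s=N$). On $\{|y|>|x|\}$, the minimum is $\langle x\rangle^{N-1}$, contributing $\langle x\rangle^{N-1-s'}\int_{|y|>|x|}\langle y\rangle^{-s}\,dy$; the inner integral converges because $s>1$ (implied by $s>N-3/2\ge 3/2$ for $N\ge 3$) and is $O(\langle x\rangle^{1-s})$, so this part is also $O(\langle x\rangle^{N-s-s'})$. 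Summing, $\int|\tilde K(x,y)|\,dy=O(\langle x\rangle^{N-s-s'})$, uniformly bounded in $x$ precisely when $s+s'\ge N$. The bound on $B$ is handled identically by swapping $s\leftrightarrow s'$ and $x\leftrightarrow y$, using $s'>1$.

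Schur's lemma then delivers $\|\tilde K g\|_{L^2}\le\sqrt{AB}\,\|g\|_{L^2}$, and unwinding the conjugation yields continuity $G:L^2_s(\R,\C)\to L^2_{-s'}(\R,\C)$. The only substantive technical point is the bookkeeping of the two domain regimes and the observation that the critical exponent $N-s-s'$ emerges from \emph{both} halves; the condition $s,s'>N-3/2$ serves to make the tail integrals converge (effectively $s,s'>1$ for $N\ge 3$), leaving $s+s'\ge N$ as the only active quantitative constraint. I do not expect any additional obstacle beyond this bookkeeping.
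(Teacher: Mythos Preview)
Your identification of the $\max$/$\min$ typo is correct and important: the statement as printed (with $\max$) is false in the stated range $s,s'>N-3/2$, for exactly the reason you give. The paper's own reduction to the model kernels $\calK_1,\calK_2$ in \eqref{def-t-kernel-2} makes it clear that the intended hypothesis is the $\min$ bound, matching \eqref{G-estimates-weak-2}.

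Your Schur-test argument with unit weight is correct and complete for $N\ge 3$, which is the paper's regime. It is a genuinely different route from the paper's proof: the paper restricts to $\R_+\times\R_+$, isolates the model kernels $\calK_1,\calK_2$, performs a dyadic decomposition $\calK_{jk}$, and invokes Cotlar--Stein almost orthogonality. Your approach is more elementary and reaches the same endpoint $s+s'=N$ without the dyadic machinery, because the row and column integrals of the conjugated kernel are $O(\langle x\rangle^{N-s-s'})$, hence uniformly bounded precisely when $s+s'\ge N$, once $s,s'>1$ makes the tails converge. The advantage of the paper's Cotlar--Stein approach is that it would also cover $N=2$ at the endpoint $s+s'=2$ with $1/2<s,s'\le 1$, where the unweighted Schur test fails (e.g.\ at $s=s'=1$ the tail $\int_{|y|>|x|}\langle y\rangle^{-1}\,dy$ diverges); but since the paper works with $N\ge 3$, this extra generality is not needed here.
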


\begin{proof}
To prove the $L^2_s\to L^2_{-s'}$ estimates
in the case $s,\,s'>1/2$, $s+s'=N$,
one can decompose $G=\sum\sb{\sigma,\,\sigma'\in\{\pm\}}
\unity_{\R_\sigma}\circ G\circ\unity_{\R_{\sigma'}}$;
it suffices to consider $\unity_{\R_{+}}\circ G\circ\unity_{\R_{+}}$.
It is enough to show that the operators
$G_1,\,G_2:\,\mathscr{D}(\R_{+})\to\mathscr{D}'(\R_{+})$
with the integral kernels
\begin{eqnarray}\label{def-t-kernel-2}
\calK_1(x,y)
=
\unity_{\R_{+}}(x)
\langle y\rangle^{N-1}
\unity_{[1,x]}(y),
\quad
\calK_2(x,y)
=
\unity_{[1,y]}(x)
\langle x\rangle^{N-1}
\unity_{\R_{+}}(y),
\quad
x,\,y\in\R_{+},
\end{eqnarray}
have the regularity properties announced in the lemma.
This is done by proving the almost orthogonality
\cite{cotlar1955,stein1993harmonic}
of the pieces of the following dyadic partition
of $\calK_1$ (and similarly for $\calK_2$):
\[
\calK_1=\sum\sb{j,\,k\in\N}
\calK_{jk},
\qquad
\calK_{jk}
=
\unity_{[1,2]}(x/2^j)
\unity_{\R_{+}}(x)
\langle y\rangle^{N-1}
\unity_{[1,x]}(y)
\unity_{[1,2]}(y/2^k),
\qquad
j,\,k\in\N_0.
\qedhere
\]
\end{proof}

We also mention the following relation
convenient for analyzing
\eqref{g-is}:

\begin{lemma}
If $\theta_1$ and $\theta_2$ satisfy
\[
(\jj\p_x^3+V-z)\theta=0,
\]
then
$u(x,\zeta)=\{\theta_1(x,\zeta),\theta_2(x,\zeta)\}$
satisfies
\[
(-\jj\p_x^3+V-z)u=0.
\]
\end{lemma}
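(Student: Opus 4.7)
The plan is a direct verification. Writing primes for $\p_x$ and $u = \theta_1\theta_2' - \theta_1'\theta_2$, I would first apply Leibniz repeatedly to obtain
\[
u' = \theta_1\theta_2'' - \theta_1''\theta_2, \qquad
u''' = 2\bigl(\theta_1'\theta_2''' - \theta_1'''\theta_2'\bigr) + \bigl(\theta_1\theta_2'''' - \theta_1''''\theta_2\bigr),
\]
noting that in $u'$ the symmetric cross-term $\theta_1'\theta_2'$ cancels, and likewise in $u'''$ the terms $\pm\theta_1''\theta_2''$ cancel.

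Next I would use the ODE to eliminate the third- and fourth-order derivatives of $\theta_j$. From $\jj\theta_j''' + (V-z)\theta_j = 0$ one reads $\theta_j''' = -\jj(z-V)\theta_j$, and differentiating once more gives $\theta_j'''' = \jj V'\theta_j - \jj(z-V)\theta_j'$. Substituting these into the expression above, the first parenthesis becomes $2\jj(z-V)\bigl(\theta_1\theta_2' - \theta_1'\theta_2\bigr) = 2\jj(z-V)u$, while in the second parenthesis the terms carrying $V'$ cancel and the rest collapses to $-\jj(z-V)u$. Adding these contributions yields $u''' = \jj(z-V)u$, which after multiplication by $-\jj$ is precisely
\[
(-\jj\p_x^3 + V - z)u = 0,
\]
as required.

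There is no substantive obstacle here; the only care needed is in tracking the powers of $\jj$. Structurally the statement is the classical fact that, for a differential operator with no first- or second-order derivative terms, the Wronskian of two solutions of $L\theta=0$ satisfies the formally transposed equation $L^t u = 0$: here $L = \jj\p_x^3 + (V-z)$ and $L^t = -\jj\p_x^3 + (V-z)$. This is exactly the identity needed to analyze the Wronskian combinations $\{\theta_2,\theta_1\}(y)$, $\{\theta_2,\gamma_1\}(y)$, $\{\gamma_1,\theta_1\}(y)$ appearing in the resolvent kernel \eqref{g-is}.
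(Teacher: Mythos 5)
Your proof is correct and is essentially the same direct computation as the paper's: differentiate the Wronskian $u=\theta_1\theta_2'-\theta_1'\theta_2$ and use the ODE to arrive at $u'''=\jj(z-V)u$, i.e.\ $(-\jj\p_x^3+V-z)u=0$. The only cosmetic difference is that the paper substitutes $\theta_j'''=\jj(V-z)\theta_j$ already at the stage of $u''$ (so that $u''=\theta_1'\theta_2''-\theta_1''\theta_2'$ and fourth derivatives never appear), whereas you substitute only at the stage of $u'''$ and compensate with the differentiated equation $\theta_j''''=\jj V'\theta_j-\jj(z-V)\theta_j'$; the two bookkeepings agree, including the powers of $\jj$.
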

\begin{proof}
Indeed, we derive:
\[
\p_x(\theta_1''\theta_2'-\theta_1'\theta_2'')
=
\theta_1'''\theta_2'-\theta_1'\theta_2'''
=\jj(V-z)(\theta_1\theta_2'-\theta_1'\theta_2)
=\jj(V-z)u,
\]
while
$\p_x u=\theta_1\theta_2''-\theta_1''\theta_2$,
$\p_x^2 u
=\theta_1'\theta_2''+\theta_1\theta_2'''-\theta_1''\theta_2'
-\theta_1'''\theta_2
=\theta_1'\theta_2''-\theta_1''\theta_2'$,
so
$-\p_x^3 u=\jj(V-z)u$.
\end{proof}

\section{Resolvent estimates via finite codimension restriction}

\label{sect-finite-codimension}

Let us prove that indeed
not only the limit of the resolvent
at $\zeta=0$ defines a bounded mapping
as stated in Lemma~\ref{lemma-n},
but also that the convergence
of the resolvent
takes place in the uniform
operator topology
of $\scrB\big(L^2_s(\R,\C),L^2_{-s'}(\R,\C)\big)$, $s,\,s'>N-3/2$.

We note that
$e^{\jj\alpha^j\zeta x}$
with $0\le j\le [j/2]$
decay as $x\to+\infty$,
while if $[j/2]<j\le N-1$
they decay as $x\to-\infty$.
Due to \eqref{def-g0},
for $\zeta\in\Gamma_N$,
the resolvent
$\big((-\jj\p_x)^N-\zeta^N I\big)^{-1}$
is represented by the integral operator
with kernel
\begin{align}\label{def-r-kernel}
R(x,y;\zeta)
=
\begin{cases}
\frac{\jj}{N}
\sum_{j=0}^{[N/2]}\frac{e^{\jj\alpha^j\zeta(x-y)}}{(\alpha^j\zeta)^{N-1}},
&
x\ge y;
\\
-\frac{\jj}{N}
\sum_{j={[N/2]+1}}^{N}\frac{e^{\jj\alpha^j\zeta(x-y)}}{(\alpha^j\zeta)^{N-1}},
&
x\le y.
\end{cases}
\end{align}
Let us mention
that one arrives at the above expression subtracting
$e^{\jj\alpha^j\zeta(x-y)}$ with $j\ge [N/2]+1$
from \eqref{def-g0},
which does not change the continuity and the jump conditions;
note that
the integral kernel $R(x,y;\zeta)$
decays for large $x$ and $y$,
giving an operator which is bounded in $L^2(\R,\C)$
(for a particular $\zeta$)
and thus represents
the integral kernel of $\big((-\jj\p_x)^N-\zeta^N I\big)^{-1}$.
Expanding exponents
in \eqref{def-r-kernel}, we find:
\begin{align}\label{def-r0}
R(x,y;\zeta)
&
=
\frac{\jj}{N}
\sum_{j=0}^{[N/2]}
\Big(
\frac{1}{(\alpha^j\zeta)^{N-1}}
+
\frac{x-y}{(\alpha^j\zeta)^{N-2}}
+
\frac{(x-y)^2}{2!(\alpha^j\zeta)^{N-3}}
+\dots
+
\frac{(x-y)^{N-2}}{(N-2)!\alpha^j\zeta}
\Big)
+
R_1(x,y;\zeta)
\nonumber
\\
&
=
\frac{\jj}{N}
\sum_{j=0}^{[N/2]}
\sum_{\ell=0}^{N-2}
\frac{(x-y)^{\ell}}{\ell!(\alpha^j\zeta)^{N-1-\ell}}
+
R_1(x,y;\zeta)
,
\end{align}
valid for all $x,\,y\in\R$,
with $R_1$ given by the Taylor series remainder,
\begin{align}\label{def-r1}
R_1(x,y;\zeta)
=
\begin{cases}
\frac{\jj}{N}\sum_{j=0}^{[N/2]}\frac{(x-y)^{N-1}e^{s_j\jj\alpha^j\zeta(x-y)}}{(N-1)!},
&x>y,
\\
-\frac{\jj}{N}\sum_{j=[N/2]+1}^{N-1}\frac{(x-y)^{N-1}e^{s_j\jj\alpha^j\zeta(x-y)}}{(N-1)!},
&x<y,
\end{cases}
\end{align}
with $s_j\in[0,1]$, $0\le j\le N-1$,
dependent on $x-y$ and $\zeta$.
From \eqref{def-r1} we deduce:
\begin{align}\label{r1-small}
\abs{R_1(x,y)}
\le
\frac{
[(N+1)/2]
}{N!}\abs{x-y}^{N-1}.
\end{align}
Above, $[N+1]/2$
represents
the maximum of the number of terms
in summations in \eqref{def-r-kernel}
(maximum of $M$ and $N-M$).
By Lemma~\ref{lemma-estimates},
the mapping
\[
R_1(\zeta):
\;
L^2_s(\R,\C)\to L^2_{-s'}(\R,\C)
\]
is bounded uniformly in $\zeta$.
We also notice that
$R_1(x,y,\zeta)$ has a limit
(pointwise in $x$, $y$)
as $\zeta\to 0$,
hence $R_1(\zeta)$
converges in the weak operator topology
of $\scrB\big(L^2_s(\R),L^2_{-s'}(\R)\big)$.
Since this result holds for
arbitrary $s,\,s'>N-3/2$,
the convergence also holds
in the uniform operator topology
of $\scrB\big(L^2_s(\R,\C),L^2_{-s'}(\R,\C)\big)$.

Moreover,
by \eqref{def-r0},
the resolvent $R(x,y;\zeta)$
coincides with the regular part $R_1(x,y;\zeta)$
on the subspace
\[
L^1_{N-2,0,\dots,0}(\R,\C)
=
\Big\{
f\in L^1(\R,\C)
\sothat
\int_\R x^j f(x)\,dx=0,
\quad
0\le j\le N-2
\Big\}
\subset L^1_{N-2}(\R,\C).
\]

\noindent
{\bf The case $N=3$.\ }
The resolvent
\eqref{def-r-kernel}
can be written as
\begin{align}\label{def-r-kernel-3}
R(x,y;\zeta)
&
=-\frac{\jj}{3}
\Big(
\frac{1}{(\alpha^2\zeta)^2}
+
\frac{\jj(x-y)}{\alpha^2\zeta}
\Big)
+R_1(x,y;\zeta)
\nonumber
\\
&=\frac{\alpha}{3}
\Big(
-\frac{\jj\alpha}{\zeta^2}
+\frac{x-y}{\zeta}
\Big)
+R_1(x,y;\zeta),
\qquad
x,\,y\in\R,
\end{align}
with $\abs{R_1(x,y;\zeta)}\le 2\abs{x-y}^2/3$
due to \eqref{r1-small}.
Let $\phi_0=1/2\chi_{[-1,1]}(x)$
and $\phi_1(x)=x\chi_{[-1,1]}(x)$;
then
$\langle x^j,\phi_k\rangle=\delta_{j k}$,
$0\le j,\,k\le 1$.
Denote $\varTheta_j(x,\zeta)=(R(\zeta)\phi_j)(x)$,
$0\le j\le 1$.
Using \eqref{def-r-kernel-3}, we compute:
\begin{align*}
R(\zeta)\phi_0
=
-\frac{\jj\alpha^2}{3\zeta^2}
+
\frac{\alpha x}{3\zeta}
+\varTheta_0(x,\zeta),
\qquad
R(\zeta)\phi_1
=
-\frac{\alpha}{3\zeta}
+\varTheta_1(x,\zeta).
\end{align*}
Applying $A-z$ (with $A=(-\jj\p_x)^3$
and $z=\zeta^3$)
to the above relations
and multiplying them by $\zeta^2$ and
$\zeta$, respectively,
we arrive at
\begin{align}
\label{e1-1}
(A-z)
\Big(
-\frac{\jj\alpha^2}{3}
+
\frac{\alpha\zeta x}{3}
+\zeta^2\varTheta_0(x,\zeta)\Big)
=\zeta^2\phi_0,
\qquad
(A-z)
\Big(-\frac{\alpha}{3}
+\zeta\varTheta_1(x,\zeta)
\Big)
=\zeta\phi_1.
\end{align}
Multiplying the second relation by $\alpha\jj$,
taking the difference, and dividing by $\zeta$,
we have:
\begin{align}
\label{e2-2}
&
(A-z)
\Big(
\frac{\alpha x}{3}
+\zeta\varTheta_0(x,\zeta)
-\jj\alpha\varTheta_1(x,\zeta)
\Big)
=
\zeta\phi_0-\jj\alpha\phi_1.
\end{align}
Defining $B_0=\phi_0\otimes\phi_0$,
we rewrite the second equation from
\eqref{e1-1}
and equation \eqref{e2-2} as
\begin{align}
\label{qn1}
(A+B_0-z)
\Big(-\frac{\alpha}{3}
+\zeta\varTheta_1(x,\zeta)
\Big)
&
=
\zeta\phi_1
-
\frac{\alpha}{3}\phi_0
+\zeta B_0\varTheta_1(\zeta);
\\
\label{qn2}
(A+B_0-z)
\Big(\frac{\alpha x}{3}
+\zeta\varTheta_0(x,\zeta)
-\jj\alpha\varTheta_1(x,\zeta)
\Big)
&
=
\zeta\phi_0-\jj\alpha\phi_1
+B_0\big(\zeta\varTheta_0(\zeta)
-\jj\alpha\varTheta_1(\zeta)\big).
\end{align}
Now we can solve
\begin{align}
(A+B_0-z)u=f,
\qquad
f\in L^2_{-s'}(\R,\C),
\quad
s'>N-3/2,
\end{align}
for $z\in\C_{+}$,
with $z=\zeta^3$, $\zeta\in\Gamma_N
=\{z\in\C\sothat
0<\arg(\zeta)<\pi/N\}$,
thus finding the expression for the resolvent
$(A+B_0-z I)^{-1}$.
We define $v(\zeta)=R(\zeta)(I-P)f$.
There is the inclusion $f\in L^2_s(\R,\C)\subset L^1_{N-2}(\R,\C)$,
so we can apply $P$ to $f$;
one has
$(I-P)f\in L^1_{N-2,0,\dots,0}(\R,\C)$.
Therefore,
\begin{align}\label{def-vz}
v(\zeta)=R(\zeta)(I-P)f=R_1(\zeta)(I-P)f
\end{align}
is bounded in $L^2_{-s'}(\R,\C)$ uniformly in
$\zeta\in\Gamma_N\cap\mathbb{D}_\delta$
and has a limit as $\zeta\to 0$.
The relation \eqref{def-vz}
gives $(A-z)v(\zeta)=(I-P)f$, hence
\begin{align}
\label{qn3}
(A+B_0-z)v(\zeta)=(I-P)f+B_0 R(\zeta)(I-P)f.
\end{align}
The system \eqref{qn1}, \eqref{qn2}, \eqref{qn3}
contains in the right-hand sides
$\phi_0$, $\phi_1$, and $f$ only;
this
allows us to express $u$ as a linear combination
\[
u=
c_1(\zeta)\psi_1(x,\zeta)
+
c_2(\zeta)\psi_2(x,\zeta)
+
c_3(\zeta)\psi_3(x,\zeta),
\]
with
\[
\psi_1(x,\zeta)
=-\frac{\alpha}{3}+\zeta\varTheta_1(x,\zeta),
\quad
\psi_2
=
\frac{\alpha x}{3}+\zeta\varTheta_0(x,\zeta)
-\jj\alpha\varTheta_1(x,\zeta),
\quad
\psi_3=v(\zeta)=R(\zeta)(I-P)f
\]
and with $c_i(\zeta)$,
$1\le i\le 3$,
uniformly bounded for
$\zeta\in\Gamma_N
\cap\mathbb{D}_\delta$, for $\delta>0$ sufficiently small.

We proved the following result:

\begin{lemma}\label{lemma-main}
Let $A=(-\jj\p_x)^3$, $B_0=\phi_0\otimes\phi_0$,
considered on domain $\dom(A)=H^3(\R,\C)$.
Then the resolvent of the operator
$A+B_0$ satisfies the limiting absorption principle
at the point $z_0=0$
with respect to the triple $L^2_s,\,L^2_{-s'},\,\C_{+}$,
for any $s,\,s'>N-3/2$,
in the sense that
\[
R_{B_0}(z)=(A+B_0-z I)^{-1}:\,L^2_s(\R,\C)\to L^2_{-s'}(\R,\C)
\]
is uniformly bounded for $z\in\C_{+}$
as has a limit (in the uniform operator topology)
as $z\to z_0$.
\end{lemma}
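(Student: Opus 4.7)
The plan is to assemble the explicit decomposition of $(A+B_0-zI)^{-1}f$ constructed in the preceding paragraphs into a uniform $L^2_s \to L^2_{-s'}$ bound together with an operator-norm limit as $z\to 0$. The starting point is the ansatz $u = c_1(\zeta)\psi_1(\cdot,\zeta) + c_2(\zeta)\psi_2(\cdot,\zeta) + c_3(\zeta)\psi_3(\cdot,\zeta)$, with
\[
\psi_1=-\tfrac{\alpha}{3}+\zeta\varTheta_1,
\qquad
\psi_2=\tfrac{\alpha x}{3}+\zeta\varTheta_0-\jj\alpha\varTheta_1,
\qquad
\psi_3=R(\zeta)(I-P)f,
\]
together with the three identities \eqref{qn1}, \eqref{qn2}, \eqref{qn3} that describe the action of $A+B_0-z$ on these three functions.

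First I would verify that each $\psi_j(\cdot,\zeta)$ is uniformly bounded in $L^2_{-s'}(\R,\C)$ for $\zeta\in\Gamma_N\cap\mathbb{D}_\delta$ with a limit as $\zeta\to 0$. In $\psi_1$ and $\psi_2$ the singular-at-$\zeta=0$ terms are the constant $-\alpha/3$ and the linear function $\alpha x/3$, which are harmless elements of $L^2_{-s'}$ under the hypothesis $s'>N-3/2=3/2$; the remaining terms $\zeta\varTheta_0$ and $\zeta\varTheta_1$ are handled by writing $\zeta\varTheta_j=\zeta R_1(\zeta)\phi_j$ after subtracting the explicit singular part of $R(\zeta)\phi_j$ from \eqref{def-r-kernel-3} and invoking Lemma~\ref{lemma-estimates} for $R_1$. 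For $\psi_3$ the key observation is that $(I-P)f\in L^1_{N-2,0,\dots,0}(\R,\C)$, so the singular part of $R(\zeta)$ annihilates $(I-P)f$ and hence $\psi_3=R_1(\zeta)(I-P)f$, to which Lemma~\ref{lemma-estimates} applies directly, giving a uniform bound in $L^2_{-s'}$ together with a limit as $\zeta\to 0$.

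Next I would determine $c_1,c_2,c_3$ by requiring $(A+B_0-z)u=f$: adding \eqref{qn1}, \eqref{qn2}, \eqref{qn3} weighted by the $c_i$ produces a $3\times 3$ linear system whose right-hand side is $f$ and whose matrix entries are built from the pairings $\langle\phi_0,\varTheta_j\rangle$, $\langle\phi_1,\varTheta_j\rangle$, and the pairings of $\phi_0$ against $\psi_3$. The main obstacle, which is the whole point of introducing the rank-one correction $B_0$, is to show that this matrix remains invertible uniformly in $\zeta$ down to $\zeta=0$. I would reduce this to computing the limiting matrix explicitly from the limits of $\varTheta_0$ and $\varTheta_1$ at $\zeta=0$, and then check that its determinant is nonzero; this nondegeneracy amounts to the statement that $B_0=\phi_0\otimes\phi_0$ pairs nontrivially with the obstruction subspace spanned by $1$ and $x$ at the threshold, equivalently that $\langle\phi_0,1\rangle=1\ne 0$, which reflects precisely the fact that the rank-one perturbation $B_0$ destroys the threshold virtual level of $A$.

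Once the coefficients $c_i(\zeta)$ are uniformly bounded $L^2_s(\R,\C)\to\C$ linear functionals of $f$ with limits at $\zeta=0$, the decomposition $u=\sum c_i\psi_i$ gives that $R_{B_0}(z):L^2_s\to L^2_{-s'}$ is uniformly bounded on $\varOmega\cap\mathbb{D}_\delta$ and converges in the uniform operator topology as $z\to 0$. To close the argument I would check that $u$ thus defined actually equals $(A+B_0-zI)^{-1}f$ for $z\in\varOmega$: by construction $(A+B_0-z)u=f$, and the smoothing of $R(\zeta)$ on the appropriate weighted spaces places $u$ in $\doma$, so the identification follows from uniqueness of the resolvent away from the essential spectrum.
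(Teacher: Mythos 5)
Your proposal is correct and follows essentially the same route as the paper: the same ansatz $u=c_1\psi_1+c_2\psi_2+c_3\psi_3$ built from \eqref{qn1}--\eqref{qn3}, uniform $L^2_{-s'}$ bounds on the $\psi_j$ via the splitting $R=({\rm singular\ part})+R_1$, Lemma~\ref{lemma-estimates}, and the moment-vanishing subspace for $(I-P)f$, followed by uniform solvability of the coefficient system. Your added observations (that the limiting coefficient matrix is nondegenerate because $B_0$ pairs nontrivially with the threshold functions, and the closing identification of $u$ with the resolvent) only make explicit what the paper leaves implicit.
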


Now we can prove the following theorem:

\begin{theorem}
Let $N=3$ and let $s,\,s'>N-3/2$.
The point $z_0=0$ is a virtual level
of the operator $A=(-\jj\p_x)^3$
with domain $\dom(A)=H^3(\R,\C)$
relative to $L^2_s,\,L^2_{-s'},\,\C_{+}$
(and also
relative to $L^2_s,\,L^2_{-s'},\,\C_{-}$).
\end{theorem}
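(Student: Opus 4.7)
The plan is to apply Definition~\ref{def-virtual} directly: we must show (a) there is a finite-rank operator $\calB\in\scrB_{00}(L^2_{-s'},L^2_s)$ such that the resolvent of $A+\imath\circ\calB\circ\jmath$ has a weak operator limit as $z\to 0$, $z\in\C_{+}$, and (b) no such operator of rank $0$ works, i.e., the bare resolvent of $A$ itself fails to have such a limit. Together these will show that the minimal rank $r$ is positive, so $z_0=0$ is indeed a virtual level. The bound $r\le 1$ is immediate from Lemma~\ref{lemma-main}: the rank-one perturbation $B_0=\phi_0\otimes\phi_0$ makes the resolvent of $A+B_0$ converge in the uniform operator topology of $\scrB(L^2_s,L^2_{-s'})$, which a fortiori gives weak operator convergence.

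For the lower bound, I would use the explicit expansion \eqref{def-r-kernel-3},
\[
R(x,y;\zeta)
=-\frac{\jj\alpha^2}{3\zeta^2}
+\frac{\alpha(x-y)}{3\zeta}
+R_1(x,y;\zeta),
\]
where $R_1(\zeta):L^2_s\to L^2_{-s'}$ is bounded uniformly in $\zeta\in\overline{\Gamma_3\cap\mathbb{D}_\mu}$ (by \eqref{r1-small} and Lemma~\ref{lemma-estimates}) and has a weak limit as $\zeta\to 0$. The leading coefficient multiplies the rank-one kernel $1\otimes 1$, which as a map $L^2_s\to L^2_{-s'}$ sends $f\mapsto(\int_\R f)\cdot 1$ (this is legitimate since $1\in L^2_{-s'}$ for $s'>1/2$). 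To rule out a weak operator limit, I would test with the compactly supported functions $f=\tfrac{1}{2}\chi_{[-1,1]}\in L^2_s$ and $g=\tfrac{1}{2}\chi_{[-1,1]}\in L^2_{s'}$, for which $\int f=\int g=1$. A direct computation gives
\[
\langle R(\zeta)f,g\rangle
=-\frac{\jj\alpha^2}{3\zeta^2}
+O(\zeta^{-1})+O(1),
\qquad \zeta\to 0,\ \zeta\in\Gamma_3,
\]
whose modulus diverges like $\abs{\zeta}^{-2}$. Hence the bare resolvent $(A-zI)^{-1}$ has no weak operator limit in $\scrB(L^2_s,L^2_{-s'})$ as $z\to 0$ in $\C_{+}$, establishing $r\ge 1$. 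Combined with the previous paragraph, $r=1$, so $z_0=0$ is a virtual level relative to $(L^2_s,L^2_{-s'},\C_{+})$.

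For $\C_{-}$, I would invoke the reflection $\mathcal{J}\colon\psi(x)\mapsto\psi(-x)$, which is a unitary involution preserving each weighted space $L^2_\nu(\R,\C)$ and satisfying $\mathcal{J}(-\jj\p_x)\mathcal{J}^{-1}=\jj\p_x$; since $N=3$ is odd, this yields $\mathcal{J}A\mathcal{J}^{-1}=-A$, hence $\mathcal{J}(A-zI)^{-1}\mathcal{J}^{-1}=-(A+zI)^{-1}$. Thus the study of $(A-zI)^{-1}$ as $z\to 0$ in $\C_{-}$ is conjugate to the study of $(A-wI)^{-1}$ as $w\to 0$ in $\C_{+}$, and both statements (existence of a rank-one perturbation realizing LAP, and failure of LAP for the bare resolvent) transfer verbatim. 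The main potentially delicate step is the lower bound, but the transparent rank-one structure $-\jj\alpha^2\zeta^{-2}(1\otimes 1)$ of the leading singular piece makes the divergence genuine on any pair $(f,g)$ with $(\int f)(\int g)\ne 0$, and no perturbation in the regular part $R_1$ can cancel it.
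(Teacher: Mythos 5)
Your proposal is correct and takes essentially the same route as the paper: the upper bound $r\le 1$ comes from Lemma~\ref{lemma-main}, and the lower bound from the singular $\zeta^{-2}$ rank-one leading term of the free resolvent kernel \eqref{def-r-kernel-3} (the paper merely remarks that this term has no pointwise limit, whereas you make the failure of the weak operator limit explicit by pairing with $\chi_{[-1,1]}$, and you treat $\C_{-}$ via the reflection $x\mapsto -x$, which the paper invokes in a remark). The paper also records an alternative proof of the lower bound—arbitrarily small potentials $V_\kappa$ producing eigenvalues $\kappa^3$ bifurcating from $z_0=0$, via \cite[Theorem 2.16]{virtual-levels}—which your argument does not need.
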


\begin{proof}
By Lemma~\ref{lemma-main},
the resolvent of the operator
$A+B_0$ satisfies the limiting absorption principle
relative to
$L^2_s,\,L^2_{-s'},\,\varOmega$ at $z_0=0$.
Since $\rank B_0=1$,
by Definition~\ref{def-virtual},
this implies that
$A=(-\jj\p_x)^3$
has at $z_0=0$
a virtual level of rank
\emph{at most} $r=1$
relative to
$L^2_s,\,L^2_{-s'},\,\varOmega$.
To show that
the point $z_0=0$ is a virtual level of
the operator $\p_x^3$
of rank at least one (relative to $L^2_s,\,L^2_{-s'},\,C_{+}$),
we need to show that
the resolvent of $A$ does not satisfy
the limiting absorption principle at $z=0$.
It is enough to notice that
the leading term in \eqref{def-r-kernel}
is given by
$\frac{\jj}{N}
\sum_{j=1}^M
\frac{1}{(\alpha^j\zeta)^{N-1}}$
which
does not have a pointwise limit as $\zeta\to 0$.

Alternatively,
by \cite[Theorem 2.16]{virtual-levels},
it is enough to demonstrate that
there is an arbitrarily small
perturbation $V$
which generates the eigenvalue $z=\zeta^3\ne 0$
near $z_0=0$.
For simplicity,
we drop factors of $\jj$,
considering the equation
\[
-u'''+Vu=z u.
\]
Let $z=\kappa^3$, $\kappa>0$
and define
\[
u_\kappa(x)=
\begin{cases}
e^{-\kappa x},&x\ge 1,
\\
1+\sum\sb{j=0}^7 a_j x^j,
&-1\le x\le 1,
\\
\Re e^{-\alpha \kappa x},
&x\le -1,
\end{cases}
\]
where $\alpha=e^{2\pi \jj/3}$.
We specify $a_j\in\R$, $0\le j\le 7$,
so that
$u_\kappa(x)$ and its three derivatives
are continuous
at $x=\pm 1$;
this leads to $a_j=O(\kappa)$,
$0\le j\le 7$,
hence
$u_\kappa$ converges pointwise to $1$
as $\kappa\to 0$.
We can assume that $\kappa_0>0$ is small enough 
so that for $0\le\kappa<\kappa_0$
one has $u_\kappa\ge 1/2$.
We define $V_\kappa$ by the relation
$\kappa^3 u_\kappa=-u_\kappa'''+V_\kappa u_\kappa$.
Since
$\p_x^3 u_\kappa=-\kappa^3 u_\kappa$ for $\abs{x}\ge 1$,
$V_\kappa\in C(\R,\R)$ is supported on $[-1,1]$;
moreover, $\sup_{x\in[-1,1]}\abs{V_\kappa(x)}\to 0$
as $\kappa\to 0$.
Thus, we have
\[
\kappa^3\in\sigma(-\p_x^3+V_\kappa),
\quad
\kappa>0,
\qquad
V_\kappa\in C_{\mathrm{comp}}([-1,1]),
\qquad
\lim\sb{\kappa\to 0+}
\norm{V_\kappa}_{L^\infty}\to 0.
\]
This produces the family of eigenvalues
bifurcating from $z=0$,
completing the proof.
\end{proof}

Now the main result
(Theorems~\ref{theorem-1}
and~\ref{theorem-2})
follows.
While Theorem~\ref{theorem-1}
just follows from the general theory
developed in \cite{virtual-levels},
let us sketch the proof of
Theorem~\ref{theorem-2}.

\begin{proof}[Proof of Theorem~\ref{theorem-2}]
By Lemma~\ref{lemma-41}
and Corollary~\ref{corollary-no-lap},
existence of a nontrivial solution
$\Psi$ to
\[
\big((-\jj\p_x)^3+V\big)\Psi=0
\]
which grows at most linearly
to $x\to+\infty$
and which is uniformly bounded for
$x\to-\infty$
(which is thus proportional to $\gamma_1(x,0)$)
implies that
there is no limiting absorption principle
relative to
$L^2_\nu,\,L^2_{-\nu},\,\C_{+}$
(and hence
relative to
$L^2_s,\,L^2_{-s'},\,\C_{+}$);
by Lemmata~\ref{lemma-41}
and~\ref{lemma-lap-nu},
this limiting absorption principle is only available
if and only if
$\gamma_1(x,0)$ grows quadratically
as $x\to+\infty$.
If indeed
$\gamma_1$ grows quadratically for $x\to+\infty$,
Lemma~\ref{lemma-main}
gives the limiting absorption principle.

Going in the other direction,
we notice that
absence of the limiting absorption principle
relative to
$L^2_s,\,L^2_{-s'},\,\C_{+}$
implies
-- by \cite{virtual-levels} --
the existence of a nontrivial virtual state
$\Psi\in L^2_{-s'}$
(where we can take
$s'=N-3/2+\epsilon$,
for any $\epsilon>0$
which already shows that $\Psi$
can grow at most linearly at infinity),
which, moreover, belongs to
$\ran((A+B_0-z_0 I)^{-1}_{L^2_s,L^2_{-s'},\varOmega})$
and hence
is bounded for
$x\to-\infty$
as one can see from
\eqref{g-is}.
\end{proof}

\bibliographystyle{sima-doi}
\bibliography{bibcomech}

\def\cprime{\mbox{'}} \def\polhk#1{\setbox0=\hbox{#1}{\ooalign{\hidewidth
  \lower1.5ex\hbox{`}\hidewidth\crcr\unhbox0}}}
\begin{thebibliography}{BGW85}

\bibitem[Agm70]{agmon1970spectral}
S.~Agmon,
  \href{https://www.mathunion.org/fileadmin/ICM/Proceedings/ICM1970.2/ICM1970.2.ocr.pdf}{{\em
  Spectral properties of {S}chr{\"o}dinger operators\/}},
  \href{https://www.mathunion.org/fileadmin/ICM/Proceedings/ICM1970.2/ICM1970.2.ocr.pdf}{in
  \href{https://www.mathunion.org/fileadmin/ICM/Proceedings/ICM1970.2/ICM1970.2.ocr.pdf}{{\em
  Actes, {C}ongr{\`e}s intern. {M}ath.\/}}}, vol.~2, pp. 679--683, 1970.

\bibitem[Bar49]{bargmann1949connection}
V.~Bargmann, \href{http://dx.doi.org/10.1103/revmodphys.21.488}{{\em On the
  connection between phase shifts and scattering potential\/}}, Rev. Modern
  Physics {\bf 21} (1949), pp. 488--493.

\bibitem[BBV21]{barth2021absence}
S.~Barth, A.~Bitter, and S.~Vugalter,
  \href{http://dx.doi.org/10.1063/5.0033524}{{\em The absence of the {E}fimov
  effect in systems of one- and two-dimensional particles\/}}, J. Mathematical
  Phys. {\bf 62} (2021).

\bibitem[BC19]{opus}
N.~Boussa{\"\i}d and A.~Comech, \href{http://dx.doi.org/10.1090/surv/244}{{\em
  Nonlinear {D}irac equation. {S}pectral stability of solitary waves\/}}, vol.
  244 of {\em Mathematical Surveys and Monographs\/}, American Mathematical
  Society, Providence, RI, 2019.

\bibitem[BC21]{virtual-levels}
N.~Boussa{\"\i}d and A.~Comech, \href{https://arxiv.org/abs/2101.11979}{{\em
  Virtual levels and virtual states of linear operators in {B}anach spaces.
  {A}pplications to {S}chr\"{o}dinger operators\/}}  (2021), submitted to J.
  Eur. Math. Soc., \eprint{2101.11979}.

\bibitem[BC22]{virtual-levels-review}
N.~Boussa{\"\i}d and A.~Comech,
  \href{http://dx.doi.org/10.1007/s40316-021-00181-7}{{\em Limiting absorption
  principle and virtual levels of operators in {B}anach spaces\/}}, Ann. Math.
  Quebec {\bf 46} (2022), pp. 161--180.

\bibitem[BGD88]{bolle1988threshold}
D.~Boll{\'e}, F.~Gesztesy, and C.~Danneels,
  \href{http://www.numdam.org/item/AIHPA_1988__48_2_175_0}{{\em Threshold
  scattering in two dimensions\/}}, Ann. Inst. H. Poincar\'e Phys. Th\'eor.
  {\bf 48} (1988), pp. 175--204.

\bibitem[BGK87]{bolle1987scattering}
D.~Boll{\'e}, F.~Gesztesy, and M.~Klaus,
  \href{http://dx.doi.org/10.1016/0022-247X(87)90281-2}{{\em Scattering theory
  for one-dimensional systems with {$\int dx\,V(x)=0$}\/}}, J. Math. Anal.
  Appl. {\bf 122} (1987), pp. 496--518.

\bibitem[BGW85]{bolle1985complete}
D.~Boll{\'e}, F.~Gesztesy, and S.~F.~J. Wilk,
  \href{http://jot.theta.ro/jot/archive/1985-013-001/1985-013-001-001.pdf}{{\em
  A complete treatment of low-energy scattering in one dimension\/}}, J.
  Operator Theory {\bf 13} (1985), pp. 3--31.

\bibitem[Bir61]{birman1961spectrum}
M.~S. Birman, \href{http://mi.mathnet.ru/eng/msb/v97/i2/p125}{{\em On the
  spectrum of singular boundary-value problems\/}}, Mat. Sb. (N.S.) {\bf 55
  (97)} (1961), pp. 125--174.

\bibitem[BM18]{bouclet2018uniform}
J.-M. Bouclet and H.~Mizutani, \href{http://dx.doi.org/10.1090/tran/7243}{{\em
  Uniform resolvent and {S}trichartz estimates for {S}chr\"{o}dinger equations
  with critical singularities\/}}, Trans. Amer. Math. Soc. {\bf 370} (2018),
  pp. 7293--7333.

\bibitem[BP35]{bethe1935scattering}
H.~A. Bethe and R.~Peierls,
  \href{http://dx.doi.org/10.1098/rspa.1935.0055}{{\em The scattering of
  neutrons by protons\/}}, Proc. Roy. Soc. London. Ser. A. {\bf 149} (1935),
  pp. 176--183.

\bibitem[Cot55]{cotlar1955}
M.~Cotlar, \href{http://cms.dm.uba.ar/depto/public/Cuyana/vol1-41-167.pdf}{{\em
  A combinatorial inequality and its applications to ${L}^2$-spaces\/}}, Rev.
  Mat. Cuyana {\bf 1} (1955), pp. 41--55.

\bibitem[CS89]{chadan1989inverse}
K.~Chadan and P.~C. Sabatier,
  \href{http://dx.doi.org/10.1007/978-3-642-83317-5}{{\em Inverse problems in
  quantum scattering theory\/}}, Texts and Monographs in Physics,
  Springer-Verlag, New York, 1989, second edn., with a foreword by R. G.
  Newton.

\bibitem[DT79]{deift1979inverse}
P.~Deift and E.~Trubowitz, \href{http://dx.doi.org/10.1002/cpa.3160320202}{{\em
  Inverse scattering on the line\/}}, Comm. Pure Appl. Math. {\bf 32} (1979),
  pp. 121--251.

\bibitem[EE18]{edmunds2018spectral}
D.~E. Edmunds and W.~D. Evans,
  \href{http://dx.doi.org/10.1093/oso/9780198812050.001.0001}{{\em Spectral
  theory and differential operators\/}}, Oxford Mathematical Monographs, Oxford
  University Press, Oxford, 2018, 2 edn.

\bibitem[Fad63a]{faddeev1963inverse}
L.~D. Faddeev, \href{http://dx.doi.org/10.1063/1.1703891}{{\em The inverse
  problem in the quantum theory of scattering\/}}, J. Mathematical Phys. {\bf
  4} (1963), pp. 72--104.

\bibitem[Fad63b]{faddeev1963mathematical}
L.~D. Faddeev, \href{http://mi.mathnet.ru/tm1765}{{\em Mathematical questions
  in the quantum theory of scattering for a system of three particles\/}},
  Trudy Mat. Inst. Steklov. {\bf 69} (1963), p. 122.

\bibitem[Fra11]{frank2011eigenvalue}
R.~L. Frank, \href{http://dx.doi.org/10.1112/blms/bdr008}{{\em Eigenvalue
  bounds for {S}chr\"{o}dinger operators with complex potentials\/}}, Bull.
  London Math. Soc. {\bf 43} (2011), pp. 745--750.

\bibitem[FS17]{frank2017eigenvalue}
R.~L. Frank and B.~Simon, \href{http://dx.doi.org/10.4171/JST/173}{{\em
  Eigenvalue bounds for {S}chr\"{o}dinger operators with complex potentials.
  {II}\/}}, J. Spectr. Theory {\bf 7} (2017), pp. 633--658.

\bibitem[Gut04]{gutierrez2004nontrivial}
S.~Guti\'{e}rrez, \href{http://dx.doi.org/10.1007/s00208-003-0444-7}{{\em Non
  trivial {${L}^q$} solutions to the {G}inzburg-{L}andau equation\/}}, Math.
  Ann. {\bf 328} (2004), pp. 1--25.

\bibitem[HL07]{hundertmark2007exponential}
D.~Hundertmark and Y.-R. Lee, \href{http://dx.doi.org/10.1112/blms/bdm065}{{\em
  Exponential decay of eigenfunctions and generalized eigenfunctions of a
  non-self-adjoint matrix {S}chr{\"o}dinger operator related to {NLS}\/}},
  Bull. London Math. Soc. {\bf 39} (2007), pp. 709--720.

\bibitem[Ign05]{ignatowsky1905reflexion}
W.~Ignatowsky,
  \href{https://gallica.bnf.fr/ark:/12148/bpt6k15325j/f502.image}{{\em
  Reflexion elektromagnetischer {W}ellen an einem {D}raft\/}}, Ann. Phys. {\bf
  18} (1905), pp. 495--522.

\bibitem[JK79]{jensen1979spectral}
A.~Jensen and T.~Kato,
  \href{http://dx.doi.org/10.1215/S0012-7094-79-04631-3}{{\em Spectral
  properties of {S}chr{\"o}dinger operators and time-decay of the wave
  functions\/}}, Duke Math. J. {\bf 46} (1979), pp. 583--611.

\bibitem[JN01]{jensen2001unified}
A.~Jensen and G.~Nenciu,
  \href{http://dx.doi.org/10.1142/S0129055X01000843}{{\em A unified approach to
  resolvent expansions at thresholds\/}}, Rev. Math. Phys. {\bf 13} (2001), pp.
  717--754.

\bibitem[JN04]{jensen2004erratum}
A.~Jensen and G.~Nenciu,
  \href{http://dx.doi.org/10.1142/S0129055X04002102}{{\em Erratum: {A} unified
  approach to resolvent expansions at thresholds\/}}, Reviews in Mathematical
  Physics {\bf 16} (2004), pp. 675--677.

\bibitem[Jos47]{jost1947falschen}
R.~Jost, \href{https://www.e-periodica.ch/cntmng?pid=hpa-001:1947:20::559}{{\em
  \"{U}ber die falschen {N}ullstellen der {E}igenwerte der {$S$}-{M}atrix\/}},
  Helvetica Phys. Acta {\bf 20} (1947), pp. 256--266.

\bibitem[KL20]{kwon2020sharp}
Y.~Kwon and S.~Lee, \href{http://dx.doi.org/10.1007/s00220-019-03536-y}{{\em
  Sharp resolvent estimates outside of the uniform boundedness range\/}},
  Communications in Mathematical Physics {\bf 374} (2020), pp. 1417--1467.

\bibitem[KRS87]{kenig1987uniform}
C.~E. Kenig, A.~Ruiz, and C.~D. Sogge,
  \href{http://dx.doi.org/10.1215/S0012-7094-87-05518-9}{{\em Uniform {S}obolev
  inequalities and unique continuation for second order constant coefficient
  differential operators\/}}, Duke Math. J. {\bf 55} (1987), pp. 329--347.

\bibitem[Lev49]{levinson1949uniqueness}
N.~Levinson,
  \href{http://gymarkiv.sdu.dk/MFM/kdvs/mfm\%2020-29/mfm-25-9.pdf}{{\em On the
  uniqueness of the potential in a {S}chr\"odinger equation for a given
  asymptotic phase\/}}, Kgl. Danske Videnskab. Selskab, Mat.-Fys. Medd. {\bf 25
  {\rm {N}r. 9}} (1949).

\bibitem[Mar86]{MR897106}
V.~A. Marchenko,
  \href{http://gen.lib.rus.ec/book/index.php?md5=4b27c4447da7748555bdad0e529b9cc5}{{\em
  Sturm--{L}iouville operators and applications\/}}, vol.~22 of {\em Operator
  Theory: Advances and Applications\/}, Birkh\"{a}user Verlag, Basel, 1986.

\bibitem[Miz19]{mizutani2019eigenvalue}
H.~Mizutani, \href{http://dx.doi.org/10.4171/JST/260}{{\em Eigenvalue bounds
  for non-self-adjoint {S}chr\"{o}dinger operators with the inverse-square
  potential\/}}, J. Spectr. Theory {\bf 9} (2019), pp. 677--709.

\bibitem[Rau78]{rauch1978local}
J.~Rauch, \href{http://projecteuclid.org/euclid.cmp/1103904212}{{\em Local
  decay of scattering solutions to {S}chr{\"o}dinger's equation\/}}, Comm.
  Math. Phys. {\bf 61} (1978), pp. 149--168.

\bibitem[RXZ18]{ren2018endpoint}
T.~Ren, Y.~Xi, and C.~Zhang,
  \href{http://dx.doi.org/10.1515/forum-2018-0042}{{\em An endpoint version of
  uniform {S}obolev inequalities\/}}, Forum Math. {\bf 30} (2018), pp.
  1279--1289.

\bibitem[Sim73]{simon1973resonances}
B.~Simon, \href{http://dx.doi.org/10.2307/1970847}{{\em Resonances in
  {$n$}-body quantum systems with dilatation analytic potentials and the
  foundations of time-dependent perturbation theory\/}}, Ann. of Math. {\bf 97}
  (1973), pp. 247--274.

\bibitem[Sim76]{simon1976bound}
B.~Simon, \href{http://math.caltech.edu/SimonPapers/70.pdf}{{\em The bound
  state of weakly coupled {S}chr{\"o}dinger operators in one and two
  dimensions\/}}, Ann. Physics {\bf 97} (1976), pp. 279--288.

\bibitem[Smi41]{smirnov1941course}
V.~Smirnov, \href{http://books.e-heritage.ru/book/10077790}{{\em Course of
  higher mathematics\/}}, vol.~4, OGIZ, Leningrad, Moscow, 1941, 1 edn.

\bibitem[Ste93]{stein1993harmonic}
E.~M. Stein, {\em Harmonic analysis: real-variable methods, orthogonality, and
  oscillatory integrals\/}, Monographs in Harmonic Analysis, III, Princeton
  University Press, Princeton, NJ, 1993, with the assistance of Timothy S.
  Murphy.

\bibitem[Sve50]{sveshnikov1950radiation}
A.~Sveshnikov, \href{http://mi.mathnet.ru/eng/dan50544}{{\em Radiation
  principle\/}}, Dokl. Akad. Nauk {\bf 73} (1950), pp. 917--920.

\bibitem[Vai68]{vainberg1968analytical}
B.~R. Vainberg, \href{http://mi.mathnet.ru/eng/msb4058}{{\em On the analytical
  properties of the resolvent for a certain class of operator-pencils\/}}, Mat.
  Sb. (N.S.) {\bf 119} (1968), pp. 259--296.

\bibitem[Vai75]{vainberg1975short}
B.~R. Vainberg, \href{http://mi.mathnet.ru/eng/umn3983}{{\em On the short wave
  asymptotic behaviour of solutions of stationary problems and the asymptotic
  behaviour as $t\to\infty$ of solutions of non-stationary problems\/}},
  Russian Mathematical Surveys {\bf 30} (1975), p.~1.

\bibitem[Wig33]{wigner1933streuung}
E.~P. Wigner, \href{http://dx.doi.org/10.1007/BF01331145}{{\em \"{U}ber die
  {S}treuung von {N}eutronen an {P}rotonen\/}}, Zeitschrift f{\"u}r Physik {\bf
  83} (1933), pp. 253--258.

\bibitem[Yaf74]{yafaev1974theory}
D.~R. Yafaev, \href{http://mi.mathnet.ru/eng/msb/v136/i4/p567}{{\em On the
  theory of the discrete spectrum of the three-particle {S}chr\"{o}dinger
  operator\/}}, Mat. Sb. (N.S.) {\bf 23} (1974), pp. 535--559.

\bibitem[Yaf75]{yafaev1975virtual}
D.~R. Yafaev, \href{http://mi.mathnet.ru/znsl2822}{{\em The virtual level of
  the {S}chr{\"o}dinger equation\/}}, \href{http://mi.mathnet.ru/znsl2822}{in
  \href{http://mi.mathnet.ru/znsl2822}{{\em Mathematical questions in the
  theory of wave propagation, 7\/}}}, vol.~51, pp. 203--216, Nauka, St.
  Petersburg, 1975.

\end{thebibliography}
\end{document}